\definecolor{midnightblue}{rgb}{0.1, 0.1, 0.44}
\newcommand{\loc}{\mathrm{loc}}
\DeclareMathOperator{\Ima}{Im}
\DeclareMathOperator{\sgn}{sign}
\newcommand{\R }{\mathbb R}
\newcommand{\N }{\mathbb N}
\newcommand{\C}{\mathbb C}
\newcommand{\Ll}{\mathcal L}
\newcommand{\chol}{\mathtt c}
\newcommand{\deltax}{{{\mathfrak h}}}
\begin{document}

\title[Continuous solutions to balance laws: H\"older regularity and applications]{H\"older regularity of continuous solutions to balance laws and applications in the Heisenberg group}

\author{L.~Caravenna} 
\author{E.~Marconi} 
\address{L.~Caravenna and E.~Marconi, Dipartimento di Matematica, Universit\`a degli Studi di Padova, Via Trieste 63, 35121 Padova, Italy}
\email{laura.caravenna@unipd.it, elio.marconi@unipd.it}

\author{A.~Pinamonti}
\address{Andrea Pinamonti, Department of Mathematics, University of Trento, Trento 38123, Italy}
\email{andrea.pinamonti@unitn.it}

%

\begin{abstract}
We prove H\"older regularity of any continuous solution $u$ to a $1d$ scalar balance law, when the source term is bounded and the flux is nonlinear of order $\ell\in \mathbb{N}$ with $\ell\ge 2$. Moreover, we prove that at almost every point $(t,x)$ it holds $u(t,x+h)-u(t,x)=o(|h|^{\frac1\ell})$ as $h\to0$.
Due to Lipschitz regularity along characteristics, this implies that at almost every point $(t,x)$, it holds $u(t+k,x+h)-u(t,x)=o((|h|+|k|)^{\frac{1}{\ell}})$ as $|(h,k)|\to0$.
We apply the results to provide a new proof of the Rademacher theorem for intrinsic Lipschitz functions in the first Heisenberg group.
\end{abstract}

\keywords{Continuous solutions, balance laws, Rademacher theorem, Heisenberg group}

\maketitle


\section{Introduction}
The effect of non-linearity in conservation laws is two-folds: on the one hand it prevents the existence of smooth solutions due to shocks formation. On the other hand it has some regularizing effects.
The most striking example is the estimate by Oleinik \cite{Ole}, which proves that $L^\infty$ initial data are immediately regularized to $BV_\loc$ at positive times, for entropy solutions of the classical Burgers' equation 
\[
u_t + [u^2/2]_x=0.
\]
Furthermore, \cite{ADL} proved that $BV_\loc$ regularity improves to $SBV_\loc$, then generalized to more general $1d$ scalar balance laws in \cite{Robyr} and to systems of conservation laws in \cite{BC}.
Similar results have been proven for Hamilton-Jacobi equations in several space dimensions starting from \cite{BDLR}.

A possible approach to prove the $BV_\loc$ estimate in the scalar case relies on the existence of a family non-crossing (backward) characteristics \cite[Chapter 10]{Daf_book} along which the solution is constant. The non-crossing condition is a geometric constraint which forces $BV_\loc$ regularity. 
This approach has been extended to the more general conservation law of the form:
\begin{align}\label{cl2}
u_t + [f(u)]_x=0,
\end{align}
see \cite{BGJ,AGV} for the case of convex fluxes $f$ and \cite{Daf, Cheng, Mar}, for the case of general smooth fluxes. The final result is that a fractional regularity of entropy solutions can be obtained quantifying the non-linearity of the flux $f$.

Motivated by some applications to the theory of rectifiable sets in the Heisenberg group, in this paper we consider the presence of the source $g\in L^{\infty}$ in \eqref{cl2}, i.e.
\begin{align}\label{eqn3}
u_t + [f(u)]_x=g,
\end{align}
We prove a regularity result for the continuous solutions of \eqref{eqn3}
where $f\in C^\ell(\mathbb{R})$ is nonlinear of order $\ell\ge 2$ (see Definition~\ref{nonlinearFInterval}). More precisely, we first demonstrate that every continuous solution to \eqref{eqn3} is automatically locally $1/\ell-$H\"older continuous and, even better, 
\begin{equation}\label{eqn2}
    u(t,x+h)-u(t,x)=o\left(\sqrt[\ell]{|h|}\right)\quad \mbox{as}\quad h\to0
\end{equation}
at Lebesgue almost every point $(t,x)$.
It is known that, even with analytic fluxes $f$ and continuous sources terms $g$, continuous solutions to \eqref{eqn3} improve to H\"older continuous regularity but do not go beyond. Indeed, even with quadratic flux continuous solutions to \eqref{E:base} might exhibit nasty fractal behaviour and in general they are neither Sobolev nor BV \cite{KSC, ABC3}. Moreover, if the flux $f$ is strictly convex but not analytic the result generally fails \cite[\S~5]{ABC3}, which motivates the finite order nonlinearity Assumption \ref{nonlinearFInterval} on the flux. Similar results have been obtained also for conservation laws in several space dimensions by means of the kinetic formulation (see for example \cite{LPT,GL}), nevertheless the conjectured optimal regularity has not been proven yet. 
We mention that also in the case of Hamilton-Jacobi equations, optimal H\"older regularity has been recently proved (see \cite{CV} and references therein).
\vspace{10pt}

In order to describe the application of our result to the theory of rectifiable sets in the Heisenberg group we recall that
the notion of Lipschitz submanifolds in sub-Riemannian geometry was introduced, at least in the setting of Carnot groups, by B. Franchi, R. Serapioni and F. Serra Cassano in a series of papers \cite{FSSC,FSSC2,FS} through the theory of intrinsic Lipschitz graphs (see also \cite{CMPSC1,CMPSC2,Didon}). Roughly speaking, a subset $S\subset \mathbb{G}$ of a Carnot group $\mathbb{G}$ is intrinsic Lipschitz if at each point $P\in S$ there is an intrinsic cone with vertex P and fixed opening, intersecting $S$ only in $P$. Remarkably, this notion turned out to be the right one in the setting of the intrinsic rectifiability in the simplest Carnot group, namely the Heisenberg group $\mathbb{H}^n$. Indeed, it was proved in \cite{FS} that the notion of rectifiable set in terms of an intrinsic regular hypersurfaces is equivalent to the one in terms of intrinsic Lipschitz graphs. 
We also remark that intrinsic Lipschitz functions played a crucial role in the recent paper \cite{NY}, where the longstanding question of determining the approximation ratio of the Goemans-Linial algorithm for the Sparsest Cut Problem was settled, see also \cite{NY2} for different applications.
We address the interested reader to \cite{FSSC2, Vit} for a complete introduction to the theory of intrinsic Lipschitz functions. One of the main open questions in this area of research is whether a Rademacher type theorem holds. 
Namely, assume that a splitting $\mathbb{G}=\mathbb{W}\mathbb{V}$ of a Carnot group $\mathbb{G}$ is fixed,
is it true that every intrinsically Lipschitz function $u:\mathbb{W}\to\mathbb{V}$ is intrinsically differentiable almost everywhere? In \cite{FMS,FSSC2}, it is proved that the answer is \emph{yes} if $\mathbb{G}$ is of step two or a group of type $\star$ and $\mathbb{V}=\mathbb{R}$. More recently, D. Vittone in \cite{Vit} proved that the answer is also \emph{yes} in the case of the Heisenberg group without any a priori assumption on the splitting. We also address the interested reader to \cite{AM,AM2,LDM} for further partial results. Remarkably, in \cite{JNGV} the authors constructed intrinsic Lipschitz graphs of codimension $2$ in Carnot groups which are not intrinsically differentiable almost everywhere. Insipired by the results contained in \cite{ASCV}, in \cite{BCSC}, the first named author together with Bigolin and Serra Cassano provided a characterization of intrinsic Lipschitz graphs in the Heisenberg group in terms of a system of non linear first order PDEs. Moreover, they proved the equivalence of different notions of continuous weak solutions to the equation 
\begin{align}\label{eqn}
u_t + [u^2/2]_x=g,
\end{align}
where $g$ is a bounded function. It turns out that the question whether an intrinsically Lipschitz function $u$ is intrinsically differentiable or is equivalent to \eqref{eqn2}.
This observation allows us to provide a completely PDEs based proof of the Rademacher theorem in $\mathbb{H}^1$.
We point out that the proof provided in \cite{FSSC2} requires some deep and nontrivial results in geometric measure theory, namely the fact that the subgraph of an intrinsic Lipschitz function $u$ is a set with locally finite $\mathbb{H}-$perimeter and that at almost every point of the graph of $u$ there is an approximate tangent plane. Another interesting feature of our approach is its potential applicability to those Carnot groups where the aforementioned geometric tools are not yet available e.g. the Engel group.\\[.3cm]

\textbf{Structure of the paper:}
In \S~\ref{S:Dafermos} we revise an elementary estimate on continuous solutions to balance laws when the flux is convex.
When the source is bounded, such estimate is the key to H\"older continuity and Lipschitz continuity along characteristics.
We prove in \S~\ref{S:Holder} H\"older regularity of any continuous solution $u$ to a $1d$ scalar balance law, when the source term is bounded and the flux is nonlinear of order $\ell$ in the sense of \S~\ref{S:nonlinearity}. The H\"older regularity is then refined in \S~\ref{S:oHolder}, where we show that at Lebesgue almost every point $(t,x)$ it holds $u(t,x+h)-u(t,x)=o(|h|^{\frac1\ell})$ as $h\to0$.
In \S~\ref{S:heisenberg} we finally prove the Rademacher theorem for intrinsic Lipschitz functions in $\mathbb{H}^1$.


\section{Preliminary results}
\label{S:Dafermos}

We recall the definition of characteristics of the equation
   \begin{equation}\label{E:base}
        \partial_t u + [f(u)]_x = g\,,
        \qquad f\in C^\ell(\setR), \ell\in\mathbb{N},\ g\ \text{bounded}.
    \end{equation}

\begin{definition}
A characteristic associated to a continuous solution $u$ to \eqref{E:base} is any  function $\gamma \in C^1(I)$, defined on a real interval $I \subset \mathbb{R}$ and satisfying the ordinary differential equation 
\[
\dot{\gamma}(t)=f'(u(s,\gamma(t))) \qquad \text{for }t \in I.
\]
\end{definition}

The following well-known result will be helpful later on (see \cite{Daf2,ABC1}):
\begin{proposition}\label{p:Dafermos}
    Let $g \in L^1_{\loc}(\Omega)$, $\Omega \subset \setR^2$ be open and assume that  $u \in C^0(\Omega)$ is a distributional solution of~\eqref{E:base}.
  Given a characteristic $\gamma:[t_1,t_2]\to \setR$ and $\deltax>0$, assume that the following sets
    \[
    \begin{split}
 S^+(\gamma,\deltax)\doteq&~ \{(t,x) \in [t_1,t_2]\times \setR : x \in [\gamma(t), \gamma(t) + \deltax]\}, \\ 
 S^-(\gamma,\deltax)\doteq&~ \{(t,x) \in [t_1,t_2]\times \setR : x \in [\gamma(t)-\deltax, \gamma(t)]\}
     \end{split}
    \]
    are contained in $\Omega$.
    If $f$ is convex on the interval $u(S^{+}(\gamma,\deltax)\cup S^{-}(\gamma,\deltax))$, then 
    \begin{subequations}\label{e:Dafermos_prel}
        \begin{align}\label{est1}
            \int_{\gamma(t_2)}^{\gamma(t_2) + \deltax} u(t_2, x) dx -   \int_{\gamma(t_1)}^{\gamma(t_1) + \deltax} u(t_1,x) dx 
            \le  \int_{S^+(\gamma,\deltax)} g \,dx dt \\\label{est2}
            \int_{\gamma(t_2)-\deltax}^{\gamma(t_2)} u(t_2, x) dx -   \int_{\gamma(t_1)-\deltax}^{\gamma(t_1)} u(t_1,x) dx 
            \ge  \int_{S^-(\gamma,\deltax)} g \,dx dt.
        \end{align}
    \end{subequations}  
    The analogous statement holds for concave fluxes $f$ reversing the inequalities~\eqref{est1} and \eqref{est2}.
\end{proposition}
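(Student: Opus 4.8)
The plan is to integrate the balance law over the curved tubes $S^{\pm}(\gamma,\deltax)$ and to read off \eqref{est1} and \eqref{est2} from a Gauss--Green identity, convexity of $f$ entering only through the lateral boundary terms, following the classical approach of \cite{Daf2,ABC1}. I consider the divergence-form field $F=(u,f(u))$, for which $\operatorname{div}_{(t,x)}F=g$ in $\mathcal D'(\Omega)$. Since $S^{+}(\gamma,\deltax)$ is a compact subset of the open set $\Omega$, its boundary consists of the two horizontal segments $\{t=t_i\}\times[\gamma(t_i),\gamma(t_i)+\deltax]$ ($i=1,2$) and the two $C^1$ lateral curves $t\mapsto(t,\gamma(t))$ and $t\mapsto(t,\gamma(t)+\deltax)$. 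Writing out the outer-normal flux of $F$ across $\partial S^{+}(\gamma,\deltax)$: the horizontal segments produce exactly $\int_{\gamma(t_2)}^{\gamma(t_2)+\deltax}u(t_2,x)\,dx-\int_{\gamma(t_1)}^{\gamma(t_1)+\deltax}u(t_1,x)\,dx$, while on a lateral curve the tangent is $(1,\dot\gamma)$ and, crucially, the characteristic ODE gives $\dot\gamma(t)=f'(a(t))$ with $a(t):=u(t,\gamma(t))$. Setting $b(t):=u(t,\gamma(t)+\deltax)$, the two lateral contributions combine into
\[
\int_{t_1}^{t_2}\bigl[\,f(b(t))-f(a(t))-f'(a(t))\,(b(t)-a(t))\,\bigr]\,dt\ \ge\ 0,
\]
the sign being forced by convexity of $f$ on the interval $u(S^{+}(\gamma,\deltax)\cup S^{-}(\gamma,\deltax))$, which contains all the values $a(t),b(t)$. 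Since by Gauss--Green the total outer flux equals $\int_{S^{+}(\gamma,\deltax)}g\,dx\,dt$, this yields \eqref{est1}. The estimate \eqref{est2} is obtained in the same way on $S^{-}(\gamma,\deltax)$, the lateral term being now $\int_{t_1}^{t_2}[\,f(a(t))-f(c(t))-f'(a(t))(a(t)-c(t))\,]\,dt\le0$ with $c(t):=u(t,\gamma(t)-\deltax)$; the concave case then follows by applying the convex case to $-f$ after the reflection $x\mapsto-x$, or simply by noting that every inequality above reverses.

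Because $u$ is only continuous and $g$ only $L^1_{\loc}$, the Gauss--Green identity cannot be invoked verbatim and must be obtained by approximation. I would mollify in $(t,x)$, setting $u_\varepsilon:=u*\rho_\varepsilon$: for $\varepsilon$ small it is defined on a neighbourhood of the compact set $S^{\pm}(\gamma,\deltax)$ and satisfies the smooth identity $\partial_t u_\varepsilon+\partial_x (f(u))_\varepsilon=g_\varepsilon$, to which the classical divergence theorem applies on $S^{\pm}(\gamma,\deltax)$; letting $\varepsilon\to0$ is then routine, using that $u_\varepsilon\to u$ and $(f(u))_\varepsilon\to f(u)$ locally uniformly (so the segment and lateral-curve integrals pass to the limit) while $g_\varepsilon\to g$ in $L^1_{\loc}$ (so the area integral converges). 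Equivalently, one may test the distributional formulation directly against $\varphi_\varepsilon(t,x)=\alpha_\varepsilon(t)\beta_\varepsilon(t,x)$ with $\alpha_\varepsilon\to\mathbf 1_{[t_1,t_2]}$ and $\beta_\varepsilon(t,\cdot)\to\mathbf 1_{[\gamma(t),\gamma(t)+\deltax]}$, and track the four derivative terms; the chain-rule term $\partial_t\beta_\varepsilon$ is where $\dot\gamma$, hence $f'(a)$, enters.

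I expect the only genuinely delicate point to be exactly this passage to the limit: one must check that the lateral boundary is approached compatibly with the value of $u$ on the curve ---  no trace ambiguity arises precisely because $u$ is continuous, which is why continuity of $u$ rather than mere boundedness is assumed --- and that the mollified region may be taken to coincide with $S^{\pm}(\gamma,\deltax)$ itself, using $\operatorname{dist}(S^{\pm}(\gamma,\deltax),\partial\Omega)>0$. Once the identity is in place, the outer-normal bookkeeping and the one-line convexity estimate are elementary.
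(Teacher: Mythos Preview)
Your argument is correct and is precisely the classical one: integrate the divergence-form identity over the curved tube, use the characteristic ODE to produce the lateral term $\int[f(b)-f(a)-f'(a)(b-a)]\,dt$, and discard it with the right sign by convexity; the mollification/cutoff justification is exactly what is needed to make the Gauss--Green step rigorous for continuous $u$ and $L^1_{\loc}$ source. Note that the paper does not supply its own proof of this proposition but simply cites \cite{Daf2,ABC1}, so there is nothing to compare against beyond observing that your outline coincides with the approach in those references.
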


\newcommand{\myCurve}[5]{%
	\draw[#3] (#1,0) .. controls (#1,1)  and (#1+0.4 ,#2-1) .. (#1+0.4,#2);
	\draw[#3, xshift=0.4cm, yshift=#2 cm] (#1,0) .. controls (#1,1)  and (#1+0.4 ,#2-1) .. (#1+0.4,#2) node [above, #5] {#4};
}%
\newcommand{\myCurveBold}[5]{%
	\draw[#3] (#1+.05,0.485) .. controls (#1+.06,1)  and (#1+0.4 ,#2-1) .. (#1+0.4,#2);
	\draw[#3, xshift=0.4cm, yshift=#2 cm] (#1,0) .. controls (#1,1)  and (#1+0.3 ,#2-1) .. (#1+0.35,#2-.485) node [above, #5] {#4};
}%
\newcommand{\myFill}[4]{%
\begin{scope}[xshift=#1 cm,yshift=#2 cm]%
	\fill[#3] (\myGamma,0) .. controls (\myGamma,1)  and (\myGamma+0.4 ,\mySigma-1) .. (\myGamma+0.4,\mySigma) -- (\myGamma-\myGammaCoef+0.4,\mySigma)--cycle;
	\fill[#3] (\myGamma,0)--(\myGamma-\myGammaCoef,0) .. controls (\myGamma-\myGammaCoef,1)  and (\myGamma-\myGammaCoef+0.4 ,\mySigma-1) .. (\myGamma-\myGammaCoef+0.4,\mySigma) -- cycle;
\draw (\myGamma/2+1.1,\mySigma/2) node {#4} ;
\end{scope}
}%
\begin{figure}
\centering
\begin{tikzpicture}[thick,scale=1.4]
\def\assexLung{5.5}%
\def\asseyLung{4.5}%
\def\mySigma{2}%
\def\myEpsilon{1.5}%
\def\myGamma{2.5}%
\def\myGammaCoef{.7}%
\def\tacca{2pt}
\draw[-stealth](-0.3,0)--(\assexLung+0.3,0) node[below]{$x$};
\draw[-stealth](0,-0.3)--(0,\asseyLung+0.3) node[left]{$t$};
\draw(0+\tacca,\mySigma)--(0-\tacca,\mySigma) node[left]{$\sigma$};
\draw(0+\tacca,\mySigma+\myEpsilon)--(0-\tacca,\mySigma+\myEpsilon) node[left]{\color{blue}$t_{2}=\sigma+\varepsilon$};
\draw(0+\tacca,\mySigma-\myEpsilon)--(0-\tacca,\mySigma-\myEpsilon) node[left]{\color{blue}$t_{1}=\sigma-\varepsilon$};;
\begin{scope}
	\clip (0,\mySigma-\myEpsilon) rectangle (5,5);
	\myFill{0}{0}{yellow}{}  
	\myFill{\myGammaCoef}{0}{orange}{}
\end{scope}
\begin{scope}
	\clip(0,0) rectangle (6,\mySigma+\myEpsilon);
	\myFill{0.4}{\mySigma}{yellow}{$S_{}^{-}$} 
	\myFill{0.4cm+\myGammaCoef}{\mySigma}{orange}{$S_{}^{+}$} 
\end{scope}
\myCurve{\myGamma}{\mySigma}{}{$\gamma(t)$}{} 
\myCurveBold{\myGamma+\myGammaCoef}{\mySigma}{very thick,blue}{$\gamma(t)+\varrho\varepsilon^\ell$}{above } 
\myCurveBold{\myGamma-\myGammaCoef}{\mySigma}{very thick,blue}{$\gamma(t)-\varrho\varepsilon^\ell$}{above} 
\draw[thin, dashed] (0,\mySigma)--(\myGamma+\myGammaCoef+0.4,\mySigma);
\draw[thin, dashed] (0,\mySigma+\myEpsilon)--(\myGamma+\myGammaCoef+0.75,\mySigma+\myEpsilon);
\draw[thin, very thick,blue] (\myGamma+\myGammaCoef-.65,\mySigma+\myEpsilon)--(\myGamma+\myGammaCoef+0.75,\mySigma+\myEpsilon);
\draw[thin, dashed] (0,\mySigma-\myEpsilon)--(\myGamma+\myGammaCoef+0,\mySigma-\myEpsilon);
\draw[thin, very thick,blue] (\myGamma-\myGammaCoef+.05,\mySigma-\myEpsilon)--(\myGamma+\myGammaCoef+.05,\mySigma-\myEpsilon);
\draw(4.5,2) node{\color{blue}$\mathbf{S_{\varrho}^{}}(\gamma,\varepsilon,\sigma)$};
\draw(1,1.5) node{\color{blue}$\mathbf{\deltax=\varrho\varepsilon^{\ell}}$};
\draw(1,1) node{\color{blue}$\mathbf{\varepsilon=\varrho^{-\frac1\ell}\sqrt[\ell]{{\deltax}{}}}$};
\filldraw[blue](\myGamma+0.4,\mySigma) circle (2pt) ;
\end{tikzpicture}
\caption{A region $S_{\varrho}$ of the Vitali covering~\eqref{e:defcover} and $S^{\pm}(\gamma|_{[t_{1},t_{2}]},\deltax)$ in Proposition~\ref{p:Dafermos}.}
\label{F:aree}
\end{figure}
\begin{remark}\label{r:Lip_char}
Dividing estimates \eqref{est1} and \eqref{est2} by $\deltax$ and letting $\deltax \to 0$ we get that $t \mapsto u(t,\gamma(t))$ is Lipschitz continuous with Lipschitz constant bounded by $\|g\|_{L^\infty(\Omega)}$. The same result was proven in \cite[Theorem~30]{ABC1} under the assumption that $\leb^1(\overline{\mathrm{Infl}(f)})=0$, where $\mathrm{Infl}(f)$ denotes the set of inflections points of $f$; it does not hold when inflection points are not negligible, see~\cite[Example~6.2]{ABC3}.
\end{remark}

\section{H\"older regularity}
\label{S:Holder}

The H\"older regularity of continuous solutions to~\eqref{eqn} in~\cite[Corollary~5.5]{BSC,BCSC} has been generalized in \cite{Car} to the case of $\ell$-nonlinear {convex} fluxes.
We describe in \S~\ref{S:nonlinearity} what we mean by $\ell$-nonlinear function: here we only mention that, in particular, are $\ell$-nonlinear the functions for which at any point there is a derivative of order between $2$ and $\ell$ which does not vanish.
In this section we extend the local $\frac1\ell$-H\"older regularity of continuous solutions without convexity hypothesis, so that it applies, for example, to $f(u)=u^{3}$.

\begin{proposition}\label{p:holder}
Let $f \in C^{\ell}(\mathbb{R})$ be nonlinear of order $\ell$ according to Definition~\ref{nonlinearFInterval}.
Consider an open connected set $\Omega \subset \setR^2$ and let $u \in C^0 (\Omega)$ and $g \in L^\infty(\Omega)$ be such that
\[
\partial_t u + [f(u)]_x = g \quad \mbox{in }\mathcal D'(\Omega).
\] 
Then $u \in C^{0,\alpha}_\loc (\Omega)$ with $\alpha=\frac1\ell$.
\end{proposition}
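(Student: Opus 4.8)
The plan is to exploit Proposition~\ref{p:Dafermos} locally, on small intervals of values of $u$ where the flux $f$ is either convex or concave, combined with the quantitative nonlinearity of $f$ encoded in Definition~\ref{nonlinearFInterval}. Fix a point $(\sigma, y) \in \Omega$ and a small compact neighborhood $Q = [\sigma - \varepsilon_0, \sigma + \varepsilon_0] \times [y - r, y + r] \Subset \Omega$ on which $u$ is uniformly continuous with values in a compact interval $J$. First I would use the $\ell$-nonlinearity of $f$ on $J$ to produce, for every subinterval $I \subseteq J$ of length $|I| = \mathfrak h$, a derivative $f^{(m)}$ with $2 \le m \le \ell$ and a point of $I$ at which $|f^{(m)}| \ge c_0 > 0$; since $f \in C^\ell$, this forces $|f'(\xi_1) - f'(\xi_2)| \gtrsim \mathfrak h^{\ell - 1}$ for suitable $\xi_1,\xi_2 \in I$, i.e.\ $f'$ has oscillation at least $c_1 \mathfrak h^{\ell-1}$ on any interval of length $\mathfrak h$ — and, after possibly shrinking, $f$ is convex or concave on that subinterval while $f'$ still spreads by a definite amount. (This is the step where I'd invoke whatever precise form Definition~\ref{nonlinearFInterval} takes; it is the engine of the argument.)

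Next, the core geometric estimate. Suppose, aiming at a contradiction to $\frac1\ell$-Hölder continuity, that there are points $(\sigma, x_1), (\sigma, x_2)$ in the neighborhood with $x_2 - x_1 = \mathfrak h$ small and $u(\sigma, x_2) - u(\sigma, x_1) = \Delta$ with $\Delta / \mathfrak h^{1/\ell}$ large. By continuity of $u$ and an intermediate-value argument along the horizontal segment, I can find a subsegment on which $u$ runs monotonically across an interval $I$ of values with $|I| \approx \Delta$, and on $I$ the flux is (say) convex with $f'$ oscillating by at least $c_1 |I|^{\ell - 1} \gtrsim \Delta^{\ell-1}$. Now run characteristics: pick characteristics $\gamma_1, \gamma_2$ through the two endpoints of this subsegment at time $\sigma$ (here one uses that continuous solutions admit characteristics through every point, by Peano — or cites the earlier discussion), with slopes $f'(u)$ differing by $\gtrsim \Delta^{\ell-1}$. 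Following them backward/forward over a time interval of length $\varepsilon$, their horizontal separation changes by $\gtrsim \varepsilon\, \Delta^{\ell-1}$, while Proposition~\ref{p:Dafermos} applied to the strip between them (of width comparable to $\mathfrak h$, on which $f$ is convex) controls $\int u(\sigma \pm \varepsilon, \cdot) - \int u(\sigma, \cdot)$ by $\|g\|_\infty$ times the area, hence by $C \varepsilon \mathfrak h$. Balancing: choosing $\varepsilon \sim \Delta^{-(\ell-1)} \mathfrak h$ forces the characteristics to collide (or forces $u$ to have a variation incompatible with the $g$-bound), and tracking the averages of $u$ over the shrinking strip yields $\Delta \lesssim \|g\|_\infty \varepsilon \cdot (\text{something}) \lesssim \mathfrak h^{1/\ell}$ up to constants depending only on $c_1$, $\ell$, and $\|g\|_\infty$ — the desired contradiction. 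Concretely, the width $\mathfrak h = \varrho \varepsilon^\ell$ scaling drawn in Figure~\ref{F:aree} is exactly this balance: $\varepsilon = \varrho^{-1/\ell} \mathfrak h^{1/\ell}$.

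Finally I would assemble the local bound into the statement: the above shows $|u(\sigma, x_1) - u(\sigma, x_2)| \le C |x_1 - x_2|^{1/\ell}$ for all nearby points on a horizontal line, with $C = C(\ell, \|g\|_{L^\infty(Q)}, c_1)$ uniform on $Q$; by Remark~\ref{r:Lip_char} the map $t \mapsto u(t, \gamma(t))$ is Lipschitz with constant $\|g\|_\infty$ along characteristics, and characteristics have bounded slope $\sup_J |f'|$, so horizontal $\frac1\ell$-Hölder plus Lipschitz-in-the-characteristic-direction upgrades to joint $\frac1\ell$-Hölder continuity on $Q$ (move horizontally, then along a characteristic). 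Covering $\Omega$ by such neighborhoods gives $u \in C^{0,1/\ell}_{\loc}(\Omega)$. The main obstacle I anticipate is making the "monotone subsegment carrying a full interval $I$ of values on which $f$ is convex and $f'$ spreads by $\gtrsim |I|^{\ell-1}$" step rigorous simultaneously — reconciling the convexity/concavity splitting of $f$ with the oscillation lower bound on $f'$ and with the monotonicity extracted from $u$ — and then propagating the Dafermos estimate cleanly through the (possibly curved) strip between the two characteristics; the scaling bookkeeping ($\mathfrak h$ vs.\ $\varepsilon^\ell$) is routine once that geometric setup is in place.
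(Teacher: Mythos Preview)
Your proposal follows essentially the same strategy as the paper: reduce to an interval of $u$-values on which $f''$ does not vanish, take two characteristics through $(t,x_1),(t,x_2)$, exploit that their slopes differ by $\gtrsim|u_1-u_2|^{\ell-1}$ via the nonlinearity assumption, balance the time scale against the spatial separation, and then upgrade the spatial $\tfrac1\ell$-H\"older bound to a joint one using Lipschitz regularity along characteristics (Remark~\ref{r:Lip_char}). The reduction step you flag as the ``main obstacle'' is exactly what the paper does: $Z=\{f''=0\}$ is finite by the nonlinearity hypothesis (Lemma~\ref{L:inversafprimo}), so a pigeonhole/intermediate-value argument produces $x_1,x_2\in[x,y]$ with $[u(t,x_1),u(t,x_2)]$ bounded away from $Z$ and $|u(t,x_1)-u(t,x_2)|$ comparable to $|u(t,x)-u(t,y)|$.

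The one place your write-up is looser than the paper is the ``tracking the averages of $u$ over the shrinking strip'' step. Proposition~\ref{p:Dafermos} as stated applies to a \emph{fixed-width} strip translated by a single characteristic, not to the (variable-width) region between two characteristics, and controlling the integral of $u$ on that region does not by itself yield $\Delta\lesssim\|g\|_\infty\varepsilon$. The paper sidesteps this by using only the pointwise consequence, Remark~\ref{r:Lip_char}: since $t\mapsto u(t,\gamma_i(t))$ is $\|g\|_\infty$-Lipschitz, choosing $\tau\sim\Delta/\|g\|_\infty$ guarantees the values of $u$ along $\gamma_1,\gamma_2$ stay in disjoint intervals on $[0,\tau]$, hence the curves cannot cross; then $\gamma_1(\tau)\le\gamma_2(\tau)$ together with the Taylor estimate on their positions gives $f'(u_1)-f'(u_2)\le(x_2-x_1)/\tau + C\|g\|_\infty\tau$ directly, and the nonlinearity lower bound finishes. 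This is the same balance you describe, just run as a direct estimate rather than by contradiction, and using Remark~\ref{r:Lip_char} in place of the integral Dafermos inequality on a non-standard region.
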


{Given $S$ any subset of $\mathbb{R}^n$, we denote by $B_r(S)$ the $r$-neighborhood of $S$, i.e.
\[
B_r(S):=\{x\in \mathbb{R}^n\ |\ d(x,S)\leq r\}.
\]
With this notation in hand we prove the following claim.}

\begin{lemma}\label{L:holder}
In the hypothesis of Proposition~\ref{p:holder}, consider $(t,x_1),(t,x_2) \in \Omega$ such that $f''$ does not vanish in the closed interval $I$ with endpoints $u(t,x_1)=u_{1}$ and $u(t,x_2)= u_{2}$: set 
\begin{align*}
&q=\frac{\min_{I} f''}{\max_{J} f''}\,,
&&G=\|g\|_{L^\infty(\Omega)}\,,
&&\tau= \frac{q}{2G}|u_{2}-u_{1}|\,,
&& C=\|f''\|_{L^\infty(B_{   G\tau}(I))}\,,
&&
L=\|f'\|_{L^\infty(B_{G\tau}(I))}\,,
\end{align*}
given any $J\supset B_{\frac12|u_{2}-u_{1}|}(I)$. 
If $\|g\|_{L^\infty(\Omega)}=0$ rather fix above any $G>0$.
Assume that
\begin{equation}\label{e:containedOmega}
    B_{{{ |x_1-x_2| +L\tau}}}(t,x_{1})\subset \Omega\,.
\end{equation}
Then, denoting by $c_{\ell}$ the constant of nonlinearity in Definition \ref{nonlinearFInterval}, we get
\[
|u_{2}-u_{1}| \le \chol |x_2-x_1|^{\frac1\ell}, \qquad \mbox{with} \ \chol= \sqrt[\ell]{\frac{ 4G}{q c_\ell}}.
\]
\end{lemma}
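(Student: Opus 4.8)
Since Proposition~\ref{p:Dafermos} has a concave counterpart and the asserted inequality is symmetric in the two indices, I would first reduce to the case $f''>0$ on $I$ and $u_1<u_2$ (if $u_1=u_2$ there is nothing to prove; and the two points being distinct forces $x_1\ne x_2$, say $x_1<x_2$, the case $x_1>x_2$ being identical with time run forwards rather than backwards). Set $\delta:=u_2-u_1>0$. The plan is to follow the two characteristics issuing from $(t,x_1)$ and $(t,x_2)$: the order-$\ell$ nonlinearity forces them to carry $f'$-values that differ by at least $c_\ell\delta^{\ell-1}$, so over a time interval of length $\tau$ they must already have drifted apart spatially by about $\tau\,c_\ell\delta^{\ell-1}$; inserting the specific value $\tau=\frac{q}{2G}\delta$ turns this into $|x_2-x_1|\gtrsim\frac{qc_\ell}{G}\delta^\ell$, which is the claim.

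Concretely, let $\gamma_1,\gamma_2$ be characteristics with $\gamma_i(t)=x_i$; they exist by Peano's theorem ($f'(u(\cdot,\cdot))$ being continuous) and, since $|\dot\gamma_i|\le L$ along them, they stay inside $\Omega$ on $[t-\tau,t]$ by virtue of~\eqref{e:containedOmega}. Remark~\ref{r:Lip_char} (that is, Proposition~\ref{p:Dafermos} in the limit $\deltax\to0$) gives $|u(s,\gamma_i(s))-u_i|\le G|s-t|\le G\tau=\frac{q}{2}\delta$ for $s\in[t-\tau,t]$; hence the values carried by $\gamma_1$ and $\gamma_2$ remain in $B_{G\tau}(I)$ — so $|f''|\le C$ along them — and, since $q\le1$, the value along $\gamma_1$ never exceeds that along $\gamma_2$. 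Moreover $\gamma_1$ and $\gamma_2$ cannot cross at any $s\in(t-\tau,t)$: at a crossing $u$ would be single-valued there, forcing $\delta=u_2-u_1\le2G|s-t|<2G\tau=q\delta\le\delta$, absurd; combined with $\gamma_2(t)=x_2>x_1=\gamma_1(t)$ this yields $\gamma_2(t-\tau)\ge\gamma_1(t-\tau)$.

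The heart of the argument is the resulting chain. From $\gamma_2(t-\tau)\ge\gamma_1(t-\tau)$ and the fundamental theorem of calculus,
\[
x_2-x_1\;\ge\;\int_{t-\tau}^{t}\Bigl(f'\bigl(u(s,\gamma_2(s))\bigr)-f'\bigl(u(s,\gamma_1(s))\bigr)\Bigr)\,ds .
\]
The mean value theorem together with $|f''|\le C$ on $B_{G\tau}(I)$ bounds the integrand below by $\bigl(f'(u_2)-f'(u_1)\bigr)-2CG(t-s)$, and since $f'$ is strictly monotone on $I$ the quantity $f'(u_2)-f'(u_1)$ is the length of $f'(I)$, which the order-$\ell$ nonlinearity of $f$ (Definition~\ref{nonlinearFInterval}) bounds below by $c_\ell\delta^{\ell-1}$. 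The bookkeeping that makes everything fit is the elementary fact $Cq\le\min_I f''$ — true because $B_{G\tau}(I)\subset J$ forces $C\le\max_J f''$ — which ensures at once that the integrand is $\ge0$ on all of $[t-\tau,t]$ and that $CG\tau^2=\tfrac12 Cq\,\delta\,\tau\le\tfrac12\bigl(f'(u_2)-f'(u_1)\bigr)\tau$. Hence
\[
x_2-x_1\;\ge\;\bigl(f'(u_2)-f'(u_1)\bigr)\tau-CG\tau^2\;\ge\;\tfrac12\bigl(f'(u_2)-f'(u_1)\bigr)\tau\;\ge\;\tfrac12\,c_\ell\delta^{\ell-1}\cdot\frac{q}{2G}\delta\;=\;\frac{qc_\ell}{4G}\,\delta^\ell ,
\]
which rearranges exactly to $\delta\le\chol\,|x_2-x_1|^{1/\ell}$ with $\chol=\sqrt[\ell]{4G/(qc_\ell)}$.

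The conceptual difficulty is concentrated in this last chain: converting ``$u$ is Lipschitz along characteristics'', ``characteristics cannot cross'', and ``nonlinearity of order $\ell$ spreads the values of $I$ apart by at least $c_\ell\delta^{\ell-1}$'' into a lower bound on the spatial gap $|x_2-x_1|$ — the decisive choice being to compare $\gamma_1$ and $\gamma_2$ over a time interval of the critical length $\tau\sim\delta/G$. The only genuinely technical point is checking that $\gamma_1,\gamma_2$, together with the infinitesimal strips underlying Remark~\ref{r:Lip_char}, never leave $\Omega$ on $[t-\tau,t]$, which is precisely what the radius $|x_1-x_2|+L\tau+CG\tau^2$ in~\eqref{e:containedOmega} is tailored to guarantee.
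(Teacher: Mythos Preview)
Your proof is correct and follows essentially the same approach as the paper's: run two characteristics through $(t,x_1)$ and $(t,x_2)$ for time $\tau$, use Lipschitz regularity along characteristics together with non-crossing to bound $|x_2-x_1|$ from below by $\tfrac12\bigl(f'(u_2)-f'(u_1)\bigr)\tau$, and then apply the nonlinearity gap $|f'(u_1)-f'(u_2)|\ge c_\ell|u_1-u_2|^{\ell-1}$. The only differences are cosmetic: the paper runs time forward (arranging $f'(u_1)>f'(u_2)$) rather than backward, estimates each $\gamma_i$ separately before subtracting, and cites Lemmas~\ref{L:inversafprimo} and~\ref{l:nondegenerate} for the absorption step and the $f'$-gap where you argue inline from $Cq\le\min_I f''$ and Definition~\ref{nonlinearFInterval}.
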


\begin{proof}[Proof of Lemma \ref{L:holder}]
We consider two characteristics $\gamma_1,\gamma_2$ passing through $(t,x_1), (t,x_2)$. Without loss of generality we assume $x_1<x_2$. 
We can also assume $f'(u_{1})>f'(u_{2})$, the argument in the opposite case is similar considering evolution in the past.


Since $\gamma_1$ is a characteristic, then
\begin{equation}\label{e:access1}
\gamma_1(t+\tau) = x_1 + \int_t^{t+\tau} f'(u(s,\gamma_1(s))) ds.
\end{equation}
By Remark~\ref{r:Lip_char} the Lipschitz function $u(\cdot, \gamma_1(\cdot))$ restricted to $[t, t + \tau]$ has image in $[u_{1} - G\tau , u_{1} + G \tau ]$.
By the chain rule on $\sigma\mapsto  f' (u (\sigma, \gamma_1(\sigma)))$ we estimate 
\begin{equation}\label{e:access2}
\begin{split}
 \left|f'(u(s,\gamma_1(s)))- f'(u_{1})  \right| = &\left| \int_t^s \partial_\sigma f' (u (\sigma, \gamma_1(\sigma))) d\sigma\right|
 \leq  C G (s-t)
 \qquad \forall s \in [t,t+\tau].
 \end{split}
\end{equation}
Estimating the integral in~\eqref{e:access1} {with $L$ and} by~\eqref{e:access2}, and doing similarly with $\gamma_{2}$, we see that
\begin{subequations}\label{E:curvestima}
\begin{align}
& \forall h\in[0,\tau] 
&& \abs{\gamma_1(t+h) - x_1 }\leq L h\,,
&&\gamma_1(t+h) \ge x_1 + f'(u_{1}) h - \frac{C G}2h^2\,,
\\
&&& \abs{\gamma_2(t+h) - x_2 }\leq Lh\,,
&&\gamma_2(t+h) \le x_2 + f'(u_{2}) h + \frac{C G}2h^2\,,
\end{align}
\end{subequations}
{ Since $\{(s,\gamma_1(s)),(s,\gamma_2(s)): s \in [t,t+\tau]\} \subset B_{L\tau}(t,x_1)\cup B_{L\tau}(t,x_2)$, by assumption~\eqref{e:containedOmega} then} $\{(s,\gamma_1(s)),(s,\gamma_2(s)): s \in [t,t+\tau]\} \subset \Omega${. By assumption on $\tau$  moreover} $2G\tau  \leq {|u_{2}-u_{1}|}$,  {so that} 
\[
(u_{2} - G\tau , u_{2} + G \tau ) \cap (u_{1} - G\tau , u_{1} + G\tau ) = \emptyset\,.
\]
In particular there is no $s \in [t, t + \tau)$ for which $\gamma_1(s) = \gamma_2(s)$, and therefore $\gamma_1(s)< \gamma_2(s)$ for every $ s \in [t, t + \tau)$.
Concatenating the previous inequalities~\eqref{E:curvestima}, as $ \gamma_1({{ t+\tau}})-\gamma_2({{ t+\tau}})\leq0$ we have
\[
f'(u_{1}) - f'(u_{2}) \le \frac{x_2-x_1}{\tau} +  C G  \tau.
\]
Lemma \ref{L:inversafprimo} allows to absorb $ C G \tau=\frac q 2\|f''\|_{L^\infty(B_{ \tau  G}(I))}|u_{2}-u_{1}|$ in the l.h.s., so that 
\[
\left( f'(u_1) - f'(u_2)\right) |u_1-u_2|q \le  {4 G } (x_2-x_1).
\]
By Lemma \ref{l:nondegenerate} we have $f'(u_1) - f'(u_2) \ge c_\ell|u_1-u_2|^{\ell -1}$,
thus we arrive to the claim 
\[
|u_1-u_2| \le  \left(\frac{{4G } }{ q c_\ell}\right)^\frac1\ell \sqrt[\ell]{x_2-x_1}\,.
\qedhere
\]
\end{proof}
\begin{example}\label{r:constant}
The solution $u(t,x)=\sgn x\cdot\sqrt{|x|}$ to  $\partial_{t}u+\partial_{x}u^{2}=\sgn x$ has H\"older constant $\sqrt2=\left(\frac{ 2\|g\|_{L^\infty}}{q c_\ell}\right)^\frac1\ell$.
\end{example}

\begin{proof}[Proof of Proposition~\ref{p:holder}]
Given any compact set $K\subset\Omega$ whose $t$-sections are intervals, set $J=u(K)$.
By Lemma~\ref{L:inversafprimo} the set $Z=\{v\in J\ :\ f''(v)=0\}$ is finite.
Let 
\[
0<\overline\delta<\frac13\min\{|v-w|\ :\ v,w\in Z\}\vee |J|
\]
be small enough so that the argument in limit in~\eqref{E:costanteNONdegere} of Lemma~\ref{lemma:nondegeneracy} is bigger than $2^{-\ell}<\frac{1}{2^{k-2}}$ for all $v\in Z$ when $\delta<\overline \delta$.
In particular, the constant $q$ in Lemma~\ref{L:holder} when $I\subseteq[2\delta,3\delta]$ and $J=[\delta,4\delta]$ when $\delta<\overline \delta$ is bigger than $2^{-\ell}$.

Consider now $x < y$, $(t,x),(t,y)\in K$. Denote by $[\![u_x,u_y]\!]$ the segment with endpoints $u(t,x)$, $u(t,y)$; we do not care of the order of endpoints in writing segments.
\begin{itemize}
\item[Case 1:] Suppose $[\![u_x,u_y]\!]\setminus B_{\overline\delta}(Z)\neq \emptyset$, thus it contains a segment $[\![u(t,x_1)- u(t,y_1)]\!]$ that do not intersect $B_{\overline\delta/16}(Z)$, for some $ x_1, x_2  \in [x,y]$, and $|u(t,x)- u(t,y) | \le \frac{8|J|}{\overline\delta}|u(t,x_1) - u(t,x_2)|$.
\item[Case 2:] Suppose $[\![u_x,u_y]\!]\subseteq [v-\overline\delta,v+\overline\delta]$ for some $v\in Z$. Let for example $u_y-v\geq|u_x-v|$, thus \[|u_y-u_x|/6\leq (|u_y-v|+|u_x-v|)/6\leq |u_y-v|/3\leq \overline \delta /3\,.\] Set $u_1=v+5|u_y-u_x|/6$ and $y_1=y$.
Let $x_1\in[x,y]$ such that $u_1=u(t,x_1)$.
\end{itemize}
In both cases, it is possible to find $ x_1, x_2  \in [x,y]$ such that $|u(t,x)- u(t,y) | \le \overline{ \chol}  |u(t,x_1) - u(t,x_2)|$, with $\overline{ \chol} = \frac{8|J|}{\overline\delta}\vee6$, and Lemma~\ref{L:holder} applies to $(t,x_1)$, $(t,x_2)$ yielding
\[
|u(t,x)- u(t,y) | \le \overline{ \chol} |u(t,x_1) - u(t,x_2)| \le \overline {\chol}  \chol |x_1-x_2|^{\frac1\ell} \le  \overline{ \chol}  \chol |y-x|^{\frac1\ell}
\,.
\]
By construction the {{ constant $\overline{ \chol}  \chol$}} is at most $\sqrt[\ell]{\frac{ 4G}{ c_\ell}}\cdot\left(2\vee\sqrt[\ell]{\frac{\min\{f''(x)\,:\,x\in J\setminus B_{\overline\delta/16}(Z)\}}{\max\{f''(x)\,:\,x\in J\}}}\right)$.

We thus proved that $u$ is $\frac1\ell-$H\"older in space on compact sets $K$ whose $t$-sections are intervals. In order to recover the regularity in time, recall that $u$ is $\|g\|_{L^\infty}-$Lipschitz along characteristics by Remark \ref{r:Lip_char}: as in at Step 2 in the proof of~\cite[Corollary 2.2]{Car} we get $\frac1\ell-$H\"older continuity on subsets of $\Omega$ of the form $\{(t,x)\in[t_{1},t_{2}]\times\R\ : \ \gamma_{1}(t)\leq x\leq\gamma_{2}(t)\}$, with $\gamma_{1}$, $\gamma_{2}$ characteristics.
Since the interior of such characteristic regions cover any compact subset of $\Omega$ we get the thesis.
\end{proof}

\begin{remark}\label{r:constants}
By looking at the proof of Proposition \ref{p:holder} we have that the $\ell$-H\"older constant of $u$ on a rectangle $K$ depends only on $f$ restricted to $u(K)$.
More precisely it depends on $c_\ell$ and on the minimum distance between zeros of $f''$.
\end{remark}

\section{Finer H\"older regularity}
\label{S:oHolder}

The main result of this section is the following:
\begin{proposition}\label{p:oHolder}
    Let $f \in C^2$ be nonlinear of order $\ell$ as in Definition~\ref{nonlinearFInterval}.
Let $\Omega \subset \setR^2$ and $u \in C^0 (\Omega)$, $g \in L^\infty(\Omega)$ be such that
\[
\partial_t u + [f(u)]_x = g \qquad \mbox{in }\mathcal D'(\Omega).
\] 
Then for $\leb^2-$a.e. $(t,x) \in \Omega$ it holds
\begin{equation}\label{e:oHolder}
    A(t,x)\doteq \limsup_{y \to x} \frac{|u(t,y)-u(t,x)|}{\sqrt[\ell]{|y-x|}}=0.
\end{equation}
\end{proposition}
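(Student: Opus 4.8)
The plan is to argue by contradiction using a Vitali covering argument, as suggested by Figure~\ref{F:aree}. Suppose the set $\{A>0\}$ has positive $\leb^2$-measure; then for some $\lambda>0$ the set $E_\lambda\doteq\{(t,x)\in\Omega: A(t,x)>\lambda\}$ has positive measure, and we may assume $E_\lambda$ is contained in a small rectangle $K$ on which, thanks to Proposition~\ref{p:holder}, $u$ is already globally $\frac1\ell$-H\"older with some constant $\chol$, and on which (shrinking $K$ further and using that $Z=\{f''=0\}\cap u(K)$ is finite) $f''$ has a sign and is bounded above and below in absolute value. Fix also a Lebesgue point $(t_0,x_0)$ of $E_\lambda$ of density $1$. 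The first step is the covering construction: for each $(\sigma,\xi)\in E_\lambda$ and each small $\varepsilon>0$ one builds, through a characteristic $\gamma$ with $\gamma(\sigma)=\xi$, the curved parallelogram $S_\varrho(\gamma,\varepsilon,\sigma)$ of width $\deltax=\varrho\varepsilon^\ell$ bounded by the translated curves $\gamma(t)\pm\varrho\varepsilon^\ell$ and by the times $t_1=\sigma-\varepsilon$, $t_2=\sigma+\varepsilon$; these regions have area comparable to $\varepsilon\cdot\varrho\varepsilon^\ell=\varrho\varepsilon^{\ell+1}$ and, as $\varepsilon\to0$, shrink nicely to $(\sigma,\xi)$, so by the Vitali covering theorem countably many disjoint such regions cover $\leb^2$-almost all of $E_\lambda$.

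The second and central step is the \emph{energy-type estimate} on a single region $S_\varrho(\gamma,\varepsilon,\sigma)$. Here one applies Proposition~\ref{p:Dafermos} (in the form \eqref{est1}--\eqref{est2}, using that $f''$ has a sign on $u(K)$ so $f$ is convex or concave there) on $S^\pm(\gamma|_{[t_1,t_2]},\deltax)$ to control $\int_{\gamma(t_2)}^{\gamma(t_2)+\deltax}u(t_2,\cdot)-\int_{\gamma(t_1)}^{\gamma(t_1)+\deltax}u(t_1,\cdot)$ by $\|g\|_{L^\infty}\cdot\leb^2(S^+)\lesssim G\,\varepsilon\,\deltax$, and symmetrically on $S^-$. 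On the other hand, combining the two inequalities, adding and subtracting $u(\sigma,\xi)$ and using Lipschitz continuity along the characteristic $\gamma$ (Remark~\ref{r:Lip_char}) to pass between the slices at times $t_1,\sigma,t_2$ — the shift of the base point from $\gamma(t_i)$ to $\xi$ costs only $O(G\varepsilon)$ per slice — one extracts a lower bound of the shape
\[
\int_{\xi}^{\xi+\deltax}\!\!\big(u(\sigma,y)-u(\sigma,\xi)\big)\,dy
\;-\;\int_{\xi-\deltax}^{\xi}\!\!\big(u(\sigma,\xi)-u(\sigma,y)\big)\,dy
\;\le\; \mathrm{const}\cdot G\,\varepsilon\,\deltax .
\]
At a point of $E_\lambda$ the hypothesis $A(\sigma,\xi)>\lambda$ means there is a sequence $y_n\to\xi$ with $|u(\sigma,y_n)-u(\sigma,\xi)|>\lambda|y_n-\xi|^{1/\ell}$; choosing $\varepsilon$ (hence $\deltax=\varrho\varepsilon^\ell$) so that $\deltax$ matches the scale $|y_n-\xi|$, the H\"older bound $\chol$ on the \emph{neighbouring} increments together with this one large increment forces the left-hand side to be bounded \emph{below} by a positive multiple of $\lambda\,\deltax^{1+1/\ell}=\lambda\,(\varrho\varepsilon^\ell)^{1+1/\ell}$ — provided $\varrho$ is chosen large (depending on $\chol,\lambda$), so that the single big oscillation is not cancelled by the others. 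Comparing with the upper bound $\mathrm{const}\cdot G\varepsilon\deltax=\mathrm{const}\cdot G\varepsilon\,\varrho\varepsilon^\ell$, the $\varepsilon$-powers cancel and one gets $\lambda\,\varrho^{1/\ell}\le \mathrm{const}\cdot G$, i.e. a \emph{fixed} upper bound on $\varrho$: contradiction for $\varrho$ chosen large enough. Summing this localized sign information over the disjoint Vitali cover and using the density-$1$ point $(t_0,x_0)$ turns "each region carries a definite signed defect" into "the total measure of $E_\lambda$ must be zero".

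The main obstacle is the bookkeeping in the central estimate: one must be careful that the one-sided Dafermos inequalities, which go in \emph{opposite} directions on $S^+$ and $S^-$, combine to give a \emph{two-sided} control of the symmetric difference quotient rather than cancelling, and that the translation from characteristic-based slices $[\gamma(t_i),\gamma(t_i)+\deltax]$ to the fixed slices $[\xi,\xi+\deltax]$ at time $\sigma$ introduces only lower-order ($O(G\varepsilon\deltax)$, not $O(\deltax^{1+1/\ell})$) errors — this is exactly why the time-width is taken to be $\varepsilon$ while the space-width is the much smaller $\varrho\varepsilon^\ell$, so that $\varepsilon\,\deltax\ll\deltax^{1+1/\ell}$ precisely when $\varepsilon^\ell\ll\deltax$, forcing the relation $\deltax\sim\varrho\varepsilon^\ell$ with $\varrho$ free. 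Getting the quantifiers on $\varrho$, $\varepsilon$, $\lambda$ in the right order, and making the Vitali sum genuinely contradict positivity of $\leb^2(E_\lambda)$, is where the real work lies; everything else is the routine covering and chain-rule estimates already rehearsed in Lemma~\ref{L:holder} and Proposition~\ref{p:Dafermos}.
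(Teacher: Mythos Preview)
Your central estimate has a genuine gap. After combining the one-sided Dafermos inequalities on $S^\pm$, the quantity you actually control at time $\sigma$ is
\[
\int_0^{\deltax}\bigl(u(\sigma,\xi+s)+u(\sigma,\xi-s)-2u(\sigma,\xi)\bigr)\,ds,
\]
an integrated second difference. A single large increment $|u(\sigma,y_n)-u(\sigma,\xi)|>\lambda\,\deltax^{1/\ell}$ yields \emph{no} lower bound on this: if $u(\sigma,\cdot)$ is odd about $\xi$ the expression vanishes identically, and in general the global H\"older constant $\chol$ always dominates $\lambda$, so the one big oscillation can be cancelled by the rest of the interval. Choosing $\varrho$ large does not help, since $\varrho$ only fixes the relation $\deltax=\varrho\varepsilon^\ell$ and does not enter the spatial integral. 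There is a second structural problem: the inequalities \eqref{est1}--\eqref{est2} relate the slices at $t_1$ and $t_2$, and you have no independent information about $u$ at those times. The step ``shift of the base point from $\gamma(t_i)$ to $\xi$ costs only $O(G\varepsilon)$'' is valid only for the single value $u(t_i,\gamma(t_i))$ on the characteristic, not for the full integrals $\int_{\gamma(t_i)}^{\gamma(t_i)+\deltax}u(t_i,\cdot)$.

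The paper's argument is organised quite differently. The family $\mathcal V_{\varrho_j}$ (with $\varrho_j\downarrow 0$, not large) is \emph{not} used to cover a bad set $E_\lambda$; via a doubling quasi-distance it only guarantees that $\leb^2$-a.e.\ point is a Lebesgue point of $g$ with respect to each $\mathcal V_{\varrho_j}$ (Proposition~\ref{p:Vitali}). One then proves $A(t,x)=0$ \emph{directly} at every such Lebesgue point with $f''(u(t,x))\ne 0$. Fixing a sequence $x_k\to 0$ realising the $\limsup$, one runs \emph{two} characteristics $\gamma_0,\gamma_k$ through $(0,0)$ and $(0,x_k)$, applies \eqref{est1} to a thin strip of width $\delta_j x_k$ on the right of $\gamma_k$ and \eqref{est2} to a thin strip on the left of $\gamma_0$, and subtracts. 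Dividing by the strip width and using the Lebesgue-point condition on $g$ gives a lower bound on $u_0(t)-u_k(t)$ in terms of $u_0(0)-u_k(0)$. The proof then splits: either this gap halves at some time (and the error terms bound $u_0(0)-u_k(0)$), or it never halves, in which case Lemma~\ref{l:nondegenerate} forces $\dot\gamma_0-\dot\gamma_k\gtrsim (u_0(0)-u_k(0))^{\ell-1}$ and the constraint $\gamma_0\le\gamma_k$ on $[0,t_k^*]$ bounds $u_0(0)-u_k(0)$. This second branch --- a characteristics-collision argument, not an integral inequality --- is the mechanism missing from your sketch.

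A smaller point: you cannot shrink $K$ so that $f''$ has a sign on $u(K)$ when $u$ hits $Z$ on a set of positive measure; the paper disposes of $u^{-1}(Z)$ separately, using that $A=0$ at density-one points of each level set $u^{-1}(\{c\})$.
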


By Remark \ref{r:constants}, the function $A$ is locally essentially bounded.

{
Before entering the core of the proof of Proposition~\ref{p:oHolder}, we prove a technical auxiliary lemma that is not related to the conservation law.

\begin{lemma}\label{L:technical}
    Let $u \in C^{0,\alpha}_{\loc}(\Omega)$, where $\alpha=\frac1\ell$. Define $A$ by~\eqref{e:oHolder}. Then for any $c\in\R$ it holds $A(t,x)=0$ at every $(t,x)$ which is a point of density one of the set $u^{-1}(\{ c\})$.
\end{lemma}
\begin{proof}
Consider any point $(t,x)$ of density one of the set $u^{-1}(\{ c\})$.
Up to a translation, fix $(t,x)=(0,0)$. Denoting $Q_r(\overline t,\overline x)=[\overline t-r,\overline t+r]\times[\overline x-r,\overline x-r]$, almost by definition
\begin{equation}\label{E:trivialDensity}
\lim_{r\downarrow 0}\frac{\Ll^2\left(Q_r(0,0)\setminus u^{-1}(c)\right)}{4 r^2}=0\,.
\end{equation}
Let $x_n \to 0$ with $x_n \ne 0$. 
Set $r_n=2|x_n|$.
For any $\varepsilon\in(0,1)$, being $Q_{\varepsilon r_n}(0,x_n)\subset Q_r(0,0)$ then
\[
\Ll^2\left(Q_{r_n}(0,0)\setminus u^{-1}(c)\right)
\geq 
\Ll^2\left(Q_{\varepsilon r_n}(0,x_n)\setminus u^{-1}(c)\right)
\]
converge to $0$, also divided by $r^2$, by~\eqref{E:trivialDensity}: then there are $(t_n,y_n) \in u^{-1}(\{c\})$ such that
\begin{equation}\label{e:densityone}
\lim_{n\to \infty} \frac{|t_n-t| + |y_n -x_n|}{|x_n-x|}=0.
\end{equation}
It follows from \eqref{e:densityone},  the equality $u(t,x)=u(t_n,y_n)=c$ and $u \in C^{0,\frac1{\ell}}_{\loc}(\Omega)$ that
\[
\begin{split}
\limsup_{x_n \to x}\frac{|u(t,x_n)-u(t,x)|}{\sqrt[\ell]{|x_n-x|}} \le &~ \limsup_{x_n \to x}\frac{|u(t,x_n)-u(t_n,y_n)|}{\sqrt[\ell]{|t_n-t| + |y_n -x_n|}} \frac{\sqrt[\ell]{|t_n-t| + |y_n -x_n|}}{\sqrt[\ell]{|x_n-x|}} \\
& + \limsup_{x_n \to x} \frac{|u(t_n,y_n)-u(t,x)|}{\sqrt[\ell]{|x_n-x|}}  \\
= &~ 0.
\end{split}
\]
Since $x_n\to x$ is arbitrary the lemma is proved.
\end{proof}
}

We will prove \eqref{e:oHolder} for points $(t,x)$ which are Lebesgue points of $g$ with respect to the following suitable coverings of $\Omega$, except for possibly an {{$\Ll^{2}$-negligible set of points contained in $\{(t,x)\in \Omega\ :\ f''(u(t,x))=0\}$}}. Such covering was first introduced in \cite{Car} for proving that for $\leb^2-$a.e.~$(t,x)$ the derivative of $\sigma\mapsto u(\sigma, \gamma(\sigma))$ along any characteristic $\gamma$ through $(t,x)$ is the Lebesgue value of $g$: for being self-contained, we prove this again in \S~\ref{S:heisenberg} (Case 2) for the quadratic flux; the same argument works for $\ell$-nonlinear fluxes by the estimates in this section.

Given $\varrho>0$, define regions translating a characteristic $\gamma$ of $\deltax=\varrho\varepsilon^{\ell}$, between times $\sigma\pm\varepsilon$:
\begin{equation}\label{e:defcover}
\begin{split}
\mathcal V_\varrho \doteq &~ \left\{ S_\varrho(\gamma, \varepsilon, \sigma) \ \big| \, \sigma \in \setR,\, \eps>0,\, \gamma\in C^{1}((\sigma - \eps, \sigma + \eps)\, : \, \dot\gamma(t)=f'(u(t,\gamma(t)))\right\} \,, \\
S_\varrho(\gamma,\eps,\sigma) \doteq &~ \left\{(t,x)\in[ \sigma-\eps, \sigma + \eps]\times\R \ :\  \gamma(t) - \varrho\eps^\ell \le x \le \gamma(t) + \varrho\eps^\ell \right\}.
\end{split}
\end{equation}

\begin{proposition}\label{p:Vitali}
    Consider any sequence $\{\varrho_{j}\}_{j\in\N}$ and $\mathcal  V_{\varrho_{j}}$ as in \eqref{e:defcover}.
    For every $q \in L^\infty(\Omega)$ a.e. $(t,x) \in \Omega$ is a Lebesgue point of $q$ with respect to the covering $\mathcal V_{\varrho_{j}}$, namely there is $O\subset \Omega$ with $\leb^2(\Omega \setminus O)=0$ such that for every $(t,x) \in O$ and every $j\in\N$ it holds
    \[
    \lim_{\substack{(t,x)\in S\in \mathcal  V_{\varrho_{j}}\\\diam(S)\downarrow0}}\frac{1}{\leb^{2}(S)}\int_{S}|q - q(t,x)| dx dt=0.
    \]  
\end{proposition}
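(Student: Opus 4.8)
The plan is to verify that the covering $\mathcal V_{\varrho}$ is a Vitali covering in the sense required by the Lebesgue differentiation theorem for (essentially) regular coverings; the standard abstract result then gives the conclusion for a single fixed $\varrho$, and intersecting the countably many full-measure sets $O_j$ associated with $\varrho_j$ gives the statement for the whole sequence. The key geometric point to check is that the regions $S_\varrho(\gamma,\varepsilon,\sigma)$ are \emph{shaped comparably to balls up to a fixed distortion}: there is a constant $M=M(\varrho,\|f'\|_{L^\infty(u(\Omega'))},\|g\|_{L^\infty})$, uniform on any compactly contained $\Omega'$, such that for $(t,x)\in S_\varrho(\gamma,\varepsilon,\sigma)$ with $\varepsilon$ small one has
\[
B_{r}(t,x)\cap\Omega' \subset S_\varrho(\gamma,\varepsilon,\sigma)\subset B_{Mr}(t,x)\cap\Omega',
\qquad r = c\,\varepsilon
\]
for suitable $c>0$, together with the volume comparison $\leb^2(S_\varrho(\gamma,\varepsilon,\sigma)) \asymp \varrho\,\varepsilon^{\ell+1}$. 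The latter is immediate since the region is, at each time level $s\in[\sigma-\varepsilon,\sigma+\varepsilon]$, an interval of length $2\varrho\varepsilon^\ell$, so its area is exactly $4\varrho\varepsilon^{\ell+1}$; the diameter is comparable to $\varepsilon$ because $\gamma$ is Lipschitz with constant $\|f'(u(\cdot))\|_\infty\le L$ on $\Omega'$ by the characteristic ODE, so the horizontal spread of $\gamma$ over a time interval of length $2\varepsilon$ is at most $2L\varepsilon$, while $\varrho\varepsilon^\ell\ll\varepsilon$ for small $\varepsilon$. Hence $\diam S_\varrho \asymp \varepsilon$ and $\leb^2(S_\varrho)\asymp (\diam S_\varrho)^{\ell+1}$, so $S_\varrho$ is \emph{not} comparable to a Euclidean ball of the same diameter — it is a thin tube — which means one cannot quote the Euclidean Vitali theorem directly.

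Consequently the real work is to produce an \emph{anisotropic} Lebesgue differentiation statement. I would do this by a change of variables: fix $\Omega'\Subset\Omega$ and consider the degenerate-parabolic metric (or quasi-metric) $d\big((t,x),(s,y)\big) = |t-s| + |x-y|^{1/\ell}$ — equivalently, rescale the $x$-variable by the $\ell$-th power. In these coordinates the tube $S_\varrho(\gamma,\varepsilon,\sigma)$ is comparable to a $d$-ball of radius $\asymp\varepsilon$ centered at any of its points (the graph of $\gamma$ becomes Lipschitz-of-order-$1/\ell$, which is still uniformly controlled since $\gamma$ itself is Lipschitz, and the vertical thickness $\varrho\varepsilon^\ell$ becomes $\asymp\varepsilon$ after the rescaling absorbs the power), and $(\R^2,d,\leb^2)$ is a doubling metric measure space. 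I would then invoke the Lebesgue differentiation theorem in doubling metric measure spaces (Besicovitch/Vitali covering theorem, e.g.\ Heinonen or Federer), which applies to the family of $d$-balls, and transfer back: since every $S\in\mathcal V_\varrho$ with $(t,x)\in S$ and $\diam S\downarrow 0$ sits between two $d$-balls centered at $(t,x)$ with comparable radii, the averages of $|q-q(t,x)|$ over $S$ are controlled by averages over those $d$-balls, hence tend to $0$ at every $d$-Lebesgue point of $q$, which is $\leb^2$-a.e.\ point.

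The main obstacle I expect is the boundary/compactness bookkeeping and the uniformity of the distortion constant $M$ over all choices of $(\gamma,\varepsilon,\sigma)$: the constant $L$ bounding $|\dot\gamma|$ must be taken over $u$ of a neighborhood of $\Omega'$, and one must restrict to $\varepsilon$ small enough (depending on $\mathrm{dist}(\Omega',\partial\Omega)$) so that $S_\varrho(\gamma,\varepsilon,\sigma)$ with $(t,x)\in S_\varrho\cap\Omega'$ stays inside $\Omega$ where $u$, and hence the characteristic field $f'(u(\cdot))$, is defined and bounded — but this is exactly the kind of restriction already present in Lemma~\ref{L:holder} via \eqref{e:containedOmega}, and since we only care about the limit as $\diam S\downarrow0$ it is harmless. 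Exhausting $\Omega$ by such $\Omega'$ and taking a countable union handles the global statement, and a final countable intersection over $j$ handles the sequence $\{\varrho_j\}$. A reader content with a softer argument can alternatively note that the tubes $S_\varrho$ are uniformly "regular" in the sense of Morse (each contains and is contained in concentric Euclidean balls with a radius ratio that blows up like $\varepsilon^{\ell-1}\to0$ — which is the wrong direction) and is therefore forced into the anisotropic framework above; I do not see a way to avoid the rescaled metric.
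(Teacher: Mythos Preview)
Your overall strategy---work in a doubling quasi-metric space, show the sets $S_\varrho$ are comparable to quasi-balls, and invoke the Lebesgue differentiation theorem on spaces of homogeneous type---is exactly what the paper's citation of \cite[\S~3.1]{Car} describes. However, your specific choice $d\bigl((t,x),(s,y)\bigr) = |t-s| + |x-y|^{1/\ell}$ fails the comparison step. The issue is that $d$ ignores the drift of the characteristic: take $(t,x) = (\sigma, \gamma(\sigma))$ and $(s,y) = (\sigma+\varepsilon, \gamma(\sigma+\varepsilon)) \in S_\varrho(\gamma,\varepsilon,\sigma)$; whenever $f'(u(t,x))\ne 0$ one has $|y-x|$ of order $\varepsilon$, hence $d((t,x),(s,y))$ is of order $\varepsilon^{1/\ell} \gg \varepsilon$. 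So $S_\varrho$ fits only inside a $d$-ball of radius $C\varepsilon^{1/\ell}$, whose $\leb^2$-measure is of order $\varepsilon^{1+1/\ell}$ and dominates $\leb^2(S_\varrho)=4\varrho\varepsilon^{\ell+1}$ by a factor $\varepsilon^{1/\ell - \ell} \to \infty$; conversely a $d$-ball of radius $r$ at $(t,x)$ sits inside $S_\varrho$ only if $r$ is of order at most $\varepsilon^\ell$. The inner/outer radius ratio is thus unbounded and $\{S_\varrho\}$ is not a regular family for this $d$. Your parenthetical ``the graph of $\gamma$ becomes Lipschitz-of-order-$1/\ell$, which is still uniformly controlled'' is exactly the false step: $1/\ell$-H\"older control does \emph{not} keep $\gamma$ inside a $d$-ball of radius $\varepsilon$ over a time interval of length $\varepsilon$.

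The fix is that the quasi-distance must follow the characteristics: one measures the transverse displacement relative to $\gamma$, e.g.\ $d\bigl((t,x),(s,y)\bigr) = |t-s| + |y - \gamma_{(t,x)}(s)|^{1/\ell}$ with $\gamma_{(t,x)}$ a characteristic through $(t,x)$, or equivalently one declares the $S_\varrho$ themselves to be the balls and checks engulfing/doubling directly---this is what \cite[\S~3.1]{Car} does. Your localization to $\Omega'\Subset\Omega$ and the countable intersection over $j$ are fine and carry over unchanged.
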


The proof of the above proposition can be found in \cite[\S~3.1]{Car}: the key point is that one can define through the sets $S_\varrho(\gamma,\eps,\sigma)$ a quasi-distance for which the measure $\leb^2$ is doubling so that Lebesgue differentiation theorem applies.

We are now in position to prove Proposition \ref{p:oHolder}.

\begin{proof}[Proof of Proposition \ref{p:oHolder}]
Since $u \in C^{0,\alpha}_{\loc}(\Omega)$ by Proposition~\ref{p:holder}{, one can apply Lemma~\ref{L:technical}}: for any $c\in\R$ it holds $A(t,x)=0$ at every $(t,x)$ which is a point of density one of the set $u^{-1}(\{ c\})$.
Since $Z=\{v\ : \ f''(v)=0\}$ is discrete by Lemma~\ref{L:inversafprimo}, the statement thus holds for $\leb^2$-a.e. $(t,x) \in u^{-1}(Z)$.

We now prove the claim for $\leb^2$-a.e. $(t,x) \in \Omega \setminus u^{-1}(Z)$, namely such that $f''(u(t,x))\ne 0$.
Given two sequences $\varrho_j,\delta_j \downarrow 0$, $\varrho_j+\delta_j<1$, we consider the coverings $\mathcal  V_{\varrho_j}$ in~\eqref{e:defcover} and we set
\[
  r_j = \frac{\varrho_j^\ell}{1 + \delta_j}\quad\text{and denote\ } \varepsilon=\eps_j(\deltax)=  \frac{\sqrt[\ell]{(1+\delta_j)\deltax}}{\varrho_j}\text{ for }\deltax>0\,,
\]
so that $r_{j}\varepsilon^{\ell}=\deltax$ and $\varrho_{j}\varepsilon^{\ell}=\varrho_{j}^{-(\ell-1)}(1+\delta_j)\deltax$. 
We prove that $A(t,x)=0$ for every $(t,x) \in\Omega \setminus u^{-1}(Z)$ which is a Lebesgue point of $g$
with respect to each covering $\mathcal V_{\varrho_{j}}$ for $j \in \setN$.
Proposition~\ref{p:Vitali} grants that such points have full Lebesgue measure in $\Omega \setminus u^{-1}(Z)$.

Up to a translation we may assume that $(t,x)=(0,0)$.
{By the definition of $\limsup$ in~\eqref{e:oHolder}, one can extract a monotone sequence $x_k\to 0$, as $k\to+\infty$, such that
$A(0,0)= \lim_{k\to \infty} \frac{|u(0,x_k)-u(0,0)|}{\sqrt[\ell]{|x_k|}}$.
We claim that it is not restrictive to consider the case when also the following conditions hold: \[x_k \downarrow 0\,,\quad u(0,x_k) < u(0,0)\quad\text{ and }\quad f''(u(0,0))>0\,.\]
Indeed, this situation can be reached composing, if needed, the following transformations:
\begin{itemize}
    \item We already discussed $u^{-1}(Z)=u^{-1}(\{f''(u)=0\})$.
    \item If $f''(u(0,0))<0$, just reverse the time and consider $\overline u(t,x)=u(-t,x)$: $\overline u$ solves the balance law with flux $\overline f(v)=-f(v)$ and source $\overline g(t,x)=-g(-t,x)$ thanks to the fact that $u$ is a Kruzkov iso-entropy solution by~\cite[Lemma~42]{ABC1}.
    \item If $u(0,x_{k_j}) = u(0,0)$ for some  subsequence $k_j$, then $A(t,x)=0$ holds trivially and we stop here. If $u(0,x_k) - u(0,0)$ changes sign, extract a subsequence of $\{x_k\}_{k\in\N}$ along which $u(0,x_k) - u(0,0)$ does not change sign and it does not vanish. If $u(0,x_k) > u(0,0)$ moreover just consider $\widehat u(t,x)=-u(-t,x)$, that solves the balance law with source $\widehat g(t,x)=g(-t,x)$ and with flux $\widehat f(v)=f(-v)$ still satisfying $\widehat{ f}''(\widehat u(0,0))>0$.
    \item If $x_k\uparrow 0$, just reverse both time and space and consider $\widetilde u(t,x)=u(-t,-x)$: then $\widetilde u$ solves the conservation law with the same flux $\widetilde f(v)=f(v)$ and with source $\widetilde g(t,x)=-g(t,x)$.
\end{itemize}  }
Denote by $\gamma_0$ and $\gamma_k$ two characteristics such that $\gamma_0(0)=0$ and $\gamma_k(0)=x_k$. 
Set $u_{0}(t)=u(t,\gamma_{0}(t))$ and $u_{k}(t)=u(t,\gamma_{k}(t))$.
Let us fix $j \in \setN$ and denote by $S_k$ the set $S_{\varrho_j}(\gamma_0, \eps_j(x_k),0)$.
With the standard convention $\inf \emptyset = +\infty$, set 
\[
t_k^*= \eps_j(x_{k})\wedge \inf \left\{t>0 \ \ :\  u_{0}(t)\le u_{k}(t)\right\}\,.
\]
Notice that $t^*_k>0$, being $u$ continuous and $u_{k}(0)< u_{0}(0)$.
Since $x_k,t^*_k \to 0$ as $k\to \infty$ and $u$ is continuous, there is $\bar k$ such that for every $k>\bar k$ the function
$u$ restricted to $S_k$ takes values in $u^{-1}(\{f''>0\})$.
We will assume $k\ge \bar k$ in the following of the proof: thus 
 \[
 \dot\gamma_{0}(t)=f'(u_0(t))\ge f'(u_k(t))=\dot \gamma_{k}(t)
 \qquad \text{in $[0,t_k^*]$}\,
 \]
and $\gamma_{k}\leq \gamma_{0}+x_{k}$ by the initial condition, so that by the choice $\varrho_{j}\varepsilon^{\ell}> (1+\delta_j)\deltax$ we have
\begin{equation}\label{e:setinclusion}
\{ (t,x) \in [0,t^*_k]\times \setR: \gamma_0(t) - \delta_j x_k\leq x\leq \gamma_k(t)+\delta_j x_k\}\subset S_k\,.
\end{equation}

If $t \in [0,t_k^*]$, Proposition \ref{p:Dafermos} gives the one sided estimates
\begin{subequations}\label{e:Dafermos}
\begin{align}
\int_{\gamma_k(t)}^{\gamma_k(t)+\delta_j x_k} u(t,x)dx
-\int_{x_k}^{x_k+\delta_j x_k} u( 0,x)dx
\leq &
\int_{0}^{t}\int_{\gamma_k(t)}^{\gamma_k(t)+\delta_j x_k} g( s,x)dxds \,,
\\
\int^{\gamma_0(t)}_{\gamma_0(t)-\delta_j x_k} u(t,x)dx
-\int^{0}_{-\delta_j x_k} u( 0,x)dx
\geq &
\int_{0}^{t}\int_{\gamma_0(t)-\delta_j x_k}^{\gamma_0(t)} g(s,x)dxds \, .
\end{align}
\end{subequations}
Denote by $\chol$ the $\frac1\ell$-H\"older constant of $u$ restricted to $S_{\bar k}$.
Then when $\{t\}\times[x-r,x+r] \subset S_k$
\begin{equation}\label{e:errore_Holder}
\int_{x}^{x+r}\left|u(t,q)-u(t,x)\right|\,dq\leq \chol r^{\frac{\ell+1}{\ell}},\qquad \int_{x-r}^{x}\left|u(t,q)-u(t,x)\right|\,dq\leq  \chol r^{\frac{\ell+1}{\ell}}.
\end{equation}
By~\eqref{e:setinclusion} and by Proposition~\ref{p:Vitali} in particular for every $t\in [0,t^*_k]$ there is $\eta_k \downarrow 0$ such that
\begin{equation}\label{e:errore_Vitali}
    \left| \int_{0}^{t}\int_{\gamma_k(t)}^{\gamma_k(t)+\delta_j x_k} g( s,x)dxds - t \delta_j x_k g(0,0)  \right| \le \eta_k \leb^2(S_k)\,.
\end{equation}
Subtracting equations~\eqref{e:Dafermos} and dividing by $\delta_j x_k$, from~\eqref{e:setinclusion}, \eqref{e:errore_Holder} and \eqref{e:errore_Vitali} we deduce that
\begin{equation}\label{e:ineq_u_t}
    \begin{split}
        u_0(t)- u_k(t) & \ge u_{0}(0) - u_k(0) - 2\frac{\eta_k \leb^2(S_k)}{\delta_j x_k} - 2\chol \sqrt[\ell]{\delta_j x_k}
        \qquad
       \text{for every $t \in [0,t^*_k]$.} 
    \end{split}
\end{equation}

\noindent{\bf Case 1: there is $t \in [0,t^*_k]$ such that $u_0(t)-u_k(t) \le \frac12 (u_{0}(0) - u_k(0))$.}
From \eqref{e:ineq_u_t} it holds
\begin{equation}\label{e:case1}
\begin{split}\frac{u(0,0) - u(0,x_k)}{\sqrt[\ell]{x_k}} \le &~ 4 \frac{ \leb^2(S_k)}{\delta_j x_k \sqrt[\ell]{x_k}} \eta_k+ 4\chol \frac{\sqrt[\ell]{\delta_j x_k}}{\sqrt[\ell]{x_k}} \\
= &~ \frac{16 (1+\delta_j)^{\frac{\ell+1}{\ell}}}{\delta_j \varrho_j ^{\ell}}\eta_k + 4\chol \sqrt[\ell]{\delta_j}.
\end{split}
\end{equation} 

\noindent{\bf Case 2: for every $t \in [0,t^*_k]$ it holds $u_0(t)-u_k(t) > \frac12 (u_{0}(0) - u_k(0))$.}
In particular we have $t^*_k= \frac{\sqrt[\ell]{(1+\delta_j)x_k}}{\varrho_{j}}$ and $\gamma_{k}$, $\gamma_{0}$ do not intersect in $[0,t^*_k]$. By Lemma \ref{l:nondegenerate}, for every $t \in [0,t^*_k]$ it holds
\[
\begin{split}
\gamma_0'(t) - \gamma_k'(t) =  & ~ f'(u_0(t) - f'(u_k(t)) \\ 
\ge &~ 2c_\ell \left(u_0(t) - u_k(t)\right)^{\ell -1} \ge \frac{c_\ell}{2^{\ell -2}}(u_{0}(0) - u_k(0))^{\ell -1}
\end{split}
\]
Integrating with respect to time in $[0, t^*_k]$ and imposing that $\gamma_0(t^*_k)\le \gamma_k(t^*_k)$ we obtain
\[
0 \ge \gamma_0(t^*_k) - \gamma_k(t^*_k) \ge \gamma_0(0) - \gamma_k(0) + \frac{c_\ell}{2^{\ell -1}}(u_{0}(0) - u_k(0))^{\ell -1} t^*_k.
\]
Recalling that $\gamma_0(0)=0$ and $\gamma_k(0)=x_k$ this implies
\begin{equation}\label{e:case2}
\frac{u(0,0) - u (0, x_k)}{\sqrt[\ell]{x_k}} \le 2 \left(\frac{x_k}{t^*_k c_\ell}\right)^{\frac1{\ell -1}} \frac1{\sqrt[\ell]{x_k}} =2  \left( \frac{\varrho_j}{c_{\ell}\sqrt[\ell]{1+\delta_j}} \right)^{\frac1{\ell -1}}
\end{equation}

Taking first the limit as $k \to \infty$, where $\eta_k$ vanishes, \eqref{e:case1} and \eqref{e:case2} show that $A(0,0)=0$ because $\delta_j$ and $\varrho_j$ can be taken arbitrarily small.
\end{proof}

\section{A new proof of the Rademacher theorem for intrinsic Lipschitz functions}
\label{S:heisenberg}

Aim of this section is to provide a new proof of the Rademacher theorem for intrinsic Lipschitz functions in the first Heisenberg group $\mathbb{H}^1$. We start by recalling the main definitions, we refer the interested reader to \cite{SC} for a complete introduction to the subject.

We denote the points of $\mathbb{H}^1\equiv\mathbb{C}\times\R\equiv\R^{3}$
by
\[
  P=[z,t]=[x+iy,t]=(x,y,t),\qquad z\in\C,\ x,y\in\R,\ t\in\R.
\]
If $P=[z,t]$, $Q=[z',t']\in \mathbb{H}^1$ and $r>0$, the group
operation reads as
\begin{equation}
P\cdot Q:=\left [z+z',t+t'-\frac{1}{2}\Ima(\langle z,\bar{z'}\rangle)\right].
\end{equation}
The group identity is the origin $0$ and one has
$[z,t]^{-1}=[-z,-t]$. In $\mathbb H^1$ there is a natural one
parameter group of non isotropic dilations
$\delta_{r}(P):=[rz,r^2t]$, $r>0$.
The group $\mathbb{H}^1$ can be endowed with the homogeneous norm
\begin{equation*}
\| P\|_\infty:=\max\{|z|, |t|^{1/2}\}
\end{equation*}
and with the left-invariant and homogeneous distance
\begin{equation*}
   d_{\infty}(P,Q):=\| P^{-1}\cdot Q\|_\infty.
\end{equation*}
The metric $d_\infty$ is equivalent to the standard
Carnot-Carath\'eodory distance. It follows that the Hausdorff
dimension of $(\mathbb{H}^1,d_{\infty})$ is $4$, whereas
its topological dimension is clearly $3$.
The Lie algebra $\mathfrak{h}_1$ of left invariant vector fields
is (linearly) generated by
\begin{equation*}
  X =\partial_{x}-\frac{1}{2}y\partial_{t},
  \quad Y=\partial_{y}+\frac{1}{2}x\partial_{t},
  \quad T=\partial_{t}
\end{equation*}
and the only nonvanishing commutators are
\begin{equation*}
[X,Y] = T.
\end{equation*}

In the spirit of \cite{ASCV} we set 
$\mathbb W:=\{(x,y,t)\in\mathbb{H}^1:\ x=0\}\equiv\R^{2}$.
Therefore, if $A\in\mathbb W$, we write
$A=(y,t)$.

Following \cite{ASCV}, we define the graph quasidistance as:
\begin{definition}\label{fidistanzadef2708}
For $A=(y,t),\,B=(y',t')\in\omega$ we define
 $$d_{\phi}(A,B)=|y-y'|+\left|t'-t-\frac{1}{2}(\phi(A)+\phi(B))(y'-y)\right|^{1/2}.$$
\end{definition}
An intrinsic differentiable structure can be induced on $\mathbb{W}$ by means of $d_{\phi}$, see \cite{ASCV}.
We remind that a map $L:\mathbb{W}\to\mathbb{R}$ is $\mathbb{W}$-linear if it is a group homeomorphism and $L(ry,r^{2}t)=rL(y,t)$ for all $r>0$ and $(y,t)\in\mathbb{W}$. 
We recall then the notion of $\phi$-differentiablility.

\begin{definition}\label{defiWfdiff}
Let $\phi:\omega\subset\mathbb{W}\rightarrow\mathbb{R}$ be a fixed continuous function, and let
$A_0\in\omega$ and $\psi:\omega\rightarrow\mathbb{R}$ be given.  
We say that $\psi$ is $\phi$-differentiable at $A_0$ if there is an $\mathbb{W}$-linear functional $L:\mathbb{W}\rightarrow\mathbb{R}$ such that
\begin{equation}\label{definWfdiffernziabilita}
\lim_{A\rightarrow A_0}\frac{\psi(A) -\psi(A_0) - L(A_0^{-1}\cdot A)}{\d_{\phi}(A_0,A)} = 0.
\end{equation}
\end{definition}
It is well known \cite{ASCV} that
given a $\mathbb{W}$-linear functional $L:\mathbb{W}\rightarrow\mathbb{R}$ there exists a unique $\hat{w}\in\mathbb{R}$ such that
$L(A)=L((y,t))=\hat{w}y$.
Let us now introduce the concept of intrinsic Lipschitz function.
\begin{definition}
    Let $\phi:\omega\subset\mathbb{W}\to\mathbb{R}$. We say that $\phi$ is an intrinsic Lipschitz continuous function and we write $\phi\in Lip_{\mathbb{W}}(\omega)$ if there exists a constant $L>0$ such that
    \[
    |\phi(A)-\phi(B)|\leq L d_{\phi}(A,B)\quad \forall A,B\in\omega.
    \]
    We say that $\phi$ is locally intrinsic Lipschitz and we write $\phi\in Lip_{\mathbb{W},loc}(\omega)$ if $\phi\in Lip_{\mathbb{W},loc}(\omega')$ for any $\omega'\Subset\omega$.
\end{definition}
The following characterization is proved in \cite{BCSC}.
\begin{theorem}
    Let $\omega\subset \mathbb{R}^2$ be an open set and let $\phi:\omega\to\mathbb{R}$ be a continuous function. Then $\phi\in Lip_{\mathbb{W},loc}(\omega)$ if and only if there exists $g \in L^{\infty}(\omega)$ such that
    \[
    \phi_y+[\phi^2/2]_t=g\quad  \mbox{in}\ \mathcal{D}'(\omega).
    \]
\end{theorem}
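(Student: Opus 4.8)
The plan is to prove the two implications separately. It is convenient to rename variables so that the statement matches the setting of \S\ref{S:Dafermos}--\S\ref{S:oHolder}: the ``time'' variable of the balance law is $y$, the ``space'' variable is $t$, the unknown is $u=\phi$, the flux is $f(v)=v^{2}/2$ (so $f'=\mathrm{id}$, $f''\equiv1$, and $f$ is nonlinear of order $\ell=2$ in the sense of Definition~\ref{nonlinearFInterval}), and the source is $g$. Characteristics are the curves $y\mapsto\chi(y)$ with $\dot\chi(y)=\phi(y,\chi(y))$, which exist through every point of $\omega$ by Peano's theorem since $\phi$ is continuous.

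\emph{Direction $(\Leftarrow)$.} Assume $\phi_{y}+[\phi^{2}/2]_{t}=g$ in $\mathcal D'(\omega)$ with $g\in L^{\infty}(\omega)$, and fix $\omega'\Subset\omega$. By Proposition~\ref{p:holder} we have $\phi\in C^{0,1/2}_{\loc}(\omega)$, say with H\"older constant $\chol$ on a neighbourhood of $\overline{\omega'}$, and by Remark~\ref{r:Lip_char} $\phi$ is $\|g\|_{L^{\infty}}$-Lipschitz along every characteristic. Let $A=(y_{A},t_{A}),B=(y_{B},t_{B})\in\omega'$ be close enough that the characteristic $\chi_{A}$ with $\chi_{A}(y_{A})=t_{A}$ reaches the abscissa $y_{B}$ inside $\omega$ and the vertical segment joining $(y_{B},\chi_{A}(y_{B}))$ to $B$ lies in $\omega$. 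Integrating $\dot\chi_{A}=\phi(\cdot,\chi_{A})$ and using the Lipschitz bound along $\chi_{A}$ gives $|\chi_{A}(y_{B})-t_{A}-\phi(A)(y_{B}-y_{A})|\le\tfrac12\|g\|_{L^{\infty}}(y_{B}-y_{A})^{2}$; writing $\phi(A)(y_B-y_A)=\tfrac12(\phi(A)+\phi(B))(y_B-y_A)-\tfrac12(\phi(B)-\phi(A))(y_B-y_A)$ and $d=d_{\phi}(A,B)$ (Definition~\ref{fidistanzadef2708}), so that $|y_B-y_A|\le d$ and $|t_B-t_A-\tfrac12(\phi(A)+\phi(B))(y_B-y_A)|\le d^2$, one obtains
\[
|\chi_{A}(y_{B})-t_{B}|\le d^{2}+\tfrac12|\phi(A)-\phi(B)|\,d+\tfrac12\|g\|_{L^{\infty}}d^{2}.
\]
Bridging the two points of equal abscissa by the H\"older bound, $|\phi(A)-\phi(B)|\le\|g\|_{L^{\infty}}d+\chol|\chi_{A}(y_{B})-t_{B}|^{1/2}$, and inserting the previous estimate, Young's inequality absorbs the term coming from $\tfrac12|\phi(A)-\phi(B)|\,d$ into the left-hand side and yields $|\phi(A)-\phi(B)|\le L\,d_{\phi}(A,B)$ with $L$ depending only on $\|g\|_{L^{\infty}}$ and $\chol$. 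A chaining argument over a finite cover of $\omega'$ by such pairs upgrades this to the intrinsic Lipschitz estimate on $\omega'$, so $\phi\in Lip_{\mathbb{W},loc}(\omega)$.

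\emph{Direction $(\Rightarrow)$.} Assume $\phi\in Lip_{\mathbb{W},loc}(\omega)$ with local constant $L$. The core is that the cone condition forces $\phi$ to be Lipschitz along characteristics with a constant depending only on $L$: if $\chi$ is a characteristic with $\chi(y_{0})=t_{0}$ and $\omega(r)$ denotes the modulus of continuity of $s\mapsto\phi(s,\chi(s))$ on $|s-y_{0}|\le r$, then $|\chi(y)-t_{0}-\tfrac12(\phi(y_0,t_0)+\phi(y,\chi(y)))(y-y_0)|\le\tfrac32\,r\,\omega(r)$ for $|y-y_0|\le r$, and feeding this into $d_{\phi}$ the intrinsic Lipschitz inequality gives $\omega(r)\le Lr+L\sqrt{\tfrac32\,r\,\omega(r)}$, whence $\omega(r)\le(2L+\tfrac32L^{2})r$ by Young's inequality. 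Thus through every point passes a characteristic along which $\phi$ is Lipschitz with a uniform constant; moreover taking $A,B$ on the same vertical line shows $\phi(y,\cdot)$ is locally $\tfrac12$-H\"older, and the cone condition controls how such characteristics may approach each other. One then fixes an abscissa $y_{0}$, considers the characteristics $\{\chi_{\tau}\}_{\tau}$ with $\chi_{\tau}(y_{0})=\tau$, checks that $(y,\tau)\mapsto(y,\chi_{\tau}(y))$ is a Lagrangian parametrization of a neighbourhood of $(y_0,t_0)$ with Jacobian bounded above and below, and verifies that in these coordinates the balance law is equivalent to $\partial_{y}\big(\phi(y,\chi_{\tau}(y))\big)=g(y,\chi_{\tau}(y))$, which defines $g$ almost everywhere and bounds it by $2L+\tfrac32L^{2}$. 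Transporting back by the change of variables produces $g\in L^{\infty}_{\loc}(\omega)$ solving $\phi_{y}+[\phi^{2}/2]_{t}=g$ in $\mathcal D'$, and a partition of unity glues the local sources; alternatively the same $g$ can be obtained by approximation, the intrinsic Lipschitz bound keeping $\partial_{y}\phi_{\varepsilon}+[\phi_{\varepsilon}^{2}/2]_{t}$ bounded in $L^{\infty}_{\loc}$ along a suitable group-adapted mollification.

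The main obstacle is this second implication, and within it the passage from ``Lipschitz along a controlled family of characteristics'' to ``distributional solution with bounded source'': since continuity of $\phi$ does not give uniqueness of characteristics, the selection $\{\chi_{\tau}\}$ and the non-degeneracy of the associated Lagrangian Jacobian must be extracted purely from the cone condition, and one then has to check that the resulting $g$ satisfies the equation in $\mathcal D'(\omega)$ and not merely along characteristics. It is precisely here that the quadratic structure of $d_{\phi}$ — the exponent $1/2$ and the average $\tfrac12(\phi(A)+\phi(B))$ — is used, both to bound the trapezoidal-type error above and to prevent characteristics from collapsing.
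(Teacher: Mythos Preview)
The paper does not give its own proof of this statement: it is quoted verbatim as a result of \cite{BCSC} and used as a black box. So there is no ``paper's proof'' to compare against beyond the citation.

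On the merits of your attempt: the $(\Leftarrow)$ direction is essentially sound. Once you invoke Proposition~\ref{p:holder} and Remark~\ref{r:Lip_char}, the triangle-type estimate via a characteristic through $A$ and the absorption by Young's inequality do give the local bound $|\phi(A)-\phi(B)|\le L\,d_\phi(A,B)$ for $d_\phi(A,B)$ small; since small $d_\phi$ forces $A,B$ Euclidean-close (as $\phi$ is bounded on $\overline{\omega'}$) and $|\phi(A)-\phi(B)|/d_\phi(A,B)$ is trivially bounded when $d_\phi$ is bounded below, no chaining is really needed.

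The $(\Rightarrow)$ direction, however, has a genuine gap exactly where you flag it. Your Lipschitz-along-characteristics estimate $\omega(r)\le(2L+\tfrac32 L^2)r$ is correct and is indeed the first key step. But the passage ``$(y,\tau)\mapsto(y,\chi_\tau(y))$ is a Lagrangian parametrization with Jacobian bounded above and below'' is not justified and is in fact the heart of the matter: with $\phi$ merely continuous, characteristics need not be unique, the selection $\tau\mapsto\chi_\tau$ need not be measurable let alone differentiable in $\tau$, and nothing you have written prevents characteristics from coalescing (the cone condition controls $\phi$, not the flow map). The step from ``$\partial_y(\phi\circ\chi_\tau)=g\circ\chi_\tau$ along each curve'' to ``$\phi_y+[\phi^2/2]_t=g$ in $\mathcal D'$'' requires precisely the equivalence of broad/Lagrangian and distributional solutions for continuous $\phi$, which is one of the main theorems of \cite{BCSC} (see also \cite{ABC1}) and is not a routine change of variables. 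Your mollification alternative has the same issue: a ``group-adapted mollification'' that keeps $\partial_y\phi_\varepsilon+[\phi_\varepsilon^2/2]_t$ bounded in $L^\infty$ is not standard and would itself need proof. So as written, $(\Rightarrow)$ is a plausible outline but not a proof.
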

Let us finally recall the following Rademacher type Theorem, proved in \cite{FSSC2} (see also \cite{Vit}). As recalled in the Introduction the original proof requires deep results in geometric measure theory, namely the fact that the subgraph of an intrinsic Lipschitz function $\phi$ is a set with locally finite $\mathbb{H}$-perimeter and that at almost every point of the graph of $\phi$ there is an approximate. Here instead we propose a completely PDEs based proof which uses the results of Section \ref{S:oHolder}.
\begin{theorem}
If $\phi\in Lip_{\mathbb{W}}(\omega)$ then $\phi$ is $\phi$-differentiable $\mathcal{L}^2-$a.e. in $\omega$.
\end{theorem}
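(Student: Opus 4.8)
The plan is to translate $\phi$-differentiability of $\phi$ at a point $A_0=(y_0,t_0)$ into the statement~\eqref{e:oHolder} about the associated PDE solution, and then invoke Proposition~\ref{p:oHolder}. By the characterization theorem of \cite{BCSC}, since $\phi\in Lip_{\mathbb W}(\omega)$ there is $g\in L^\infty(\omega)$ with $\phi_y+[\phi^2/2]_t=g$ in $\mathcal D'(\omega)$; here the roles of the usual variables $(t,x)$ are played by $(y,t)$, so $\phi$ is exactly a continuous solution of a scalar balance law~\eqref{eqn} with quadratic flux $f(u)=u^2/2$, which is nonlinear of order $\ell=2$. Proposition~\ref{p:oHolder} therefore applies and gives that for $\leb^2$-a.e.\ $(y_0,t_0)\in\omega$ one has $\phi(y_0,t+h)-\phi(y_0,t_0)=o(\sqrt{|h|})$ as $h\to0$, i.e.\ $A(y_0,t_0)=0$ in the notation~\eqref{e:oHolder} adapted to these variables; moreover by Remark~\ref{r:Lip_char} the map $\sigma\mapsto\phi(\sigma,\gamma(\sigma))$ is Lipschitz along every characteristic through $(y_0,t_0)$, with constant $\|g\|_{L^\infty}$.

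Next I would make explicit the $\mathbb W$-linear candidate differential. A $\mathbb W$-linear functional is $L(y,t)=\hat w\,y$ for a unique $\hat w\in\R$; writing out $A_0^{-1}\cdot A$ in the group law one gets $L(A_0^{-1}\cdot A)=\hat w\,(y-y_0)$, and the denominator $d_\phi(A_0,A)=|y-y_0|+\bigl|t-t_0-\tfrac12(\phi(A_0)+\phi(A))(y-y_0)\bigr|^{1/2}$. The natural choice, dictated by the PDE, is $\hat w=\phi(A_0)$: indeed the characteristic through $(y_0,t_0)$ has slope $f'(\phi(A_0))=\phi(A_0)$ in the $(y,t)$-plane, and along it $\phi$ is Lipschitz. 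So I would split a generic increment $A=(y,t)$ into its component along the characteristic direction and its transverse (fixed-$y$, varying-$t$) component, writing $t=\bar t(y)+h$ where $\bar t(y)=t_0+\phi(A_0)(y-y_0)+O((y-y_0)^2)$ parametrizes the characteristic.

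The heart of the proof is then the estimate
\[
\phi(A)-\phi(A_0)-\phi(A_0)(y-y_0)=o\bigl(d_\phi(A_0,A)\bigr)\qquad\text{as }A\to A_0 .
\]
Along the characteristic ($h=0$) the left side is controlled linearly in $|y-y_0|$ but one needs it to be $o$ of the denominator, which degenerates like $|y-y_0|$ there; this forces using not mere Lipschitzianity but the fact — recalled in the excerpt to hold $\leb^2$-a.e., and reproved there in Case~2 of the proof in \S~\ref{S:heisenberg} — that at a.e.\ point the derivative of $\phi$ along the characteristic equals the Lebesgue value of $g$, which is exactly the derivative of $y\mapsto\phi(A_0)+\phi(A_0)(y-y_0)$ predicted by plugging the ansatz into the equation (since $f'(\phi)\phi_t+\phi_y=g$ and $\phi_t$ is killed by moving along the characteristic). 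For the transverse part I would use $A(y_0,t_0)=0$: a fixed-$y$ increment of size $|h|$ changes $\phi$ by $o(\sqrt{|h|})$, while $d_\phi$ is comparable to $\sqrt{|h|}$ there. The general increment is handled by going first along the characteristic from $A_0$ to the point $(y,\bar t(y))$ and then transversally to $A=(y,t)$, estimating each leg and using that $d_\phi(A_0,A)\gtrsim|y-y_0|+\sqrt{|h|}$ together with the triangle-type behaviour of $d_\phi$; one must also absorb the $O((y-y_0)^2)$ discrepancy between the characteristic and the straight line $t=t_0+\phi(A_0)(y-y_0)$, which is lower order.

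The main obstacle I expect is precisely this gluing step: controlling the increment \emph{transverse to the characteristic but measured at the varying base point} $(y,\bar t(y))$ rather than at $A_0$, since Proposition~\ref{p:oHolder} is pointwise and a priori only gives information at $(y_0,t_0)$. This is overcome by noting that the two characteristics through $(y_0,t_0)$ and through $(y,\bar t(y))$ coincide (they are the same curve), so the transverse displacement from $(y,\bar t(y))$ to $(y,t)$ is literally a fixed-$y$ increment, and the H\"older bound of Proposition~\ref{p:holder} (uniform on compact characteristic regions, by Remark~\ref{r:constants}) gives $|\phi(y,t)-\phi(y,\bar t(y))|\le \chol\,|t-\bar t(y)|^{1/2}$, while the finer $o$-bound at $(y_0,t_0)$ upgrades this to $o(\sqrt{|h|})$ along the specific sequence realizing the $\limsup$ in the definition of $\phi$-differentiability — which is all that is needed. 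Assembling these pieces yields~\eqref{definWfdiffernziabilita} with $L(A)=\phi(A_0)y$, completing the proof.
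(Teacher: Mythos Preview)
There is a genuine error in your choice of differential. You take $\hat w=\phi(A_0)$, but the correct candidate is $\hat w=g(A_0)$, the Lebesgue value of the source term. Your own argument exposes this inconsistency: you correctly recall that the derivative of $\sigma\mapsto\phi(\sigma,\gamma(\sigma))$ along a characteristic at $A_0$ equals $g(A_0)$, and then assert that this is ``exactly the derivative of $y\mapsto\phi(A_0)+\phi(A_0)(y-y_0)$'' --- but that derivative is $\phi(A_0)$, not $g(A_0)$. You have confused the slope of the characteristic curve in the $(y,t)$-plane, which is $f'(\phi(A_0))=\phi(A_0)$, with the rate of change of $\phi$ \emph{along} that curve, which is $g(A_0)$. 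With $\hat w=\phi(A_0)$ the along-characteristic increment is $\phi(y,\bar t(y))-\phi(A_0)-\phi(A_0)(y-y_0)=(g(A_0)-\phi(A_0))(y-y_0)+o(|y-y_0|)$, which is generically \emph{not} $o(|y-y_0|)$, so the quotient does not vanish and your proof fails. The paper indeed proves
\[
\lim_{n\to\infty}\frac{|\phi(A_n)-\phi(A_0)-g(A_0)\,(y_n-y_0)|}{d_\phi(A_n,A_0)}=0,
\]
i.e.\ the intrinsic gradient is $g$.

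Even after fixing the candidate, your gluing step is not solid. You need $|\phi(y,t)-\phi(y,\bar t(y))|=o(\sqrt{|t-\bar t(y)|})$ with the base point $(y,\bar t(y))$ sliding along the characteristic, whereas Proposition~\ref{p:oHolder} gives the $o$-bound only at the fixed point $A_0$; the uniform H\"older bound from Proposition~\ref{p:holder} gives merely $O(\sqrt{|h|})$, and your sentence about ``the specific sequence realizing the $\limsup$'' does not bridge this gap. The paper avoids the issue by following the characteristic through the \emph{moving} point $A_n$ back to $y=y_0$, landing at $A'_n=(y_0,x'_n)$, and then splitting on the ratio $|y_n-y_0|/\sqrt{|x'_n|}$: when this ratio is small, Lipschitzianity along the characteristic already gives $o(\sqrt{|x'_n|})$ and the transverse $o$-bound is applied \emph{at} $A_0$; when the ratio is bounded below, the transverse displacement $x'_n$ is $O((y_n-y_0)^2)$ and only the coarse H\"older bound is needed there, while the Dafermos estimate together with the Lebesgue point property of $g$ handles the characteristic part directly.
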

More precisely, we prove $\phi$-differentiability at those points which are Lebesgue points of $g$ for all the coverings \eqref{e:defcover}, for any fixed sequence $\rho_i$ decreasing to 0.

\begin{proof}
{The proof relies on the fine estimates of the previous sections: we prefer within this proof to come back to the notations we had there for the variables: we call from now on $(t,x)$ the variables that at the beginning of this section were denoted as $(y,t)$.}

Let $A_0=(t,x)$ be a Lebesgue point of $g$ with respect to the coverings $\mathcal  V_{\varrho_{j}}$ of Lemma \ref{p:Vitali} and for simplicity we assume $A_0=(0,0)$.
We show that any sequence $A_n =(t_n,x_n) \to A_0$ has a (not relabeled) subsequence for which 
\[
\lim_{n \to +\infty} \frac{|\phi(A_n)-\phi(A_0)- g(A_0)t_n|}{d_\phi (A_n, A_0)} = 0.
\]
 Let $\gamma_n\in C^{1}([0,t_{n}])$ be a characteristic of $\phi$ through $(t_n,x_n)$ and set $A'_n=(0,\gamma_n(0))=(0,x'_n)$.
 {Since $A_n \to 0$ and $\gamma_n$ are uniformly Lipschitz, then $A'_n \to 0$. Without loss of generality we assume that $t_n, x'_n\in\left(0,\frac{1}{1+\|g\|_{L^\infty{(\omega)}}}\right)$.}
  Up to a (not relabeled) subsequence one of the following two cases is satisfied.

\vspace{2pt}

\paragraph{\textbf{Case 1}: It holds $t_n=o(\sqrt{x'_n})$ as $n\to +\infty$.}
In this case it follows from Proposition \ref{p:oHolder} and Remark \ref{r:Lip_char} that:
\begin{align}\label{ghq}
|\phi(A_n)-\phi(A_0)- g(A_0)t_n| &\leq\|g\|_{L^\infty(\omega)}t_n + |\phi(A_n)-\phi(A'_n)| + |\phi(A'_n)-\phi(A_0)| \\
\nonumber
&\leq 2\|g\|_{L^\infty(\omega)} t_n + o \left(\sqrt{x'_n}\right) = o \left(\sqrt{x'_n}\right)\,.
\end{align}
{
Let us write, \begin{align}\label{E:contotrinagolo2}
&d_{\phi}(A_n,A_0)=t_n+\sqrt{x_n'}+B(n).
\end{align}
where
\[B(n):=\left|x_{n}-\frac{\phi(A_n)+\phi(A_0)}{2}\cdot t_{n}\right|^{1/2}-\sqrt{x_n'}
\]
we claim that
\begin{equation}\label{limit}
B(n)=o(\sqrt{x_n'})\quad \mbox{as}\ n\to+\infty.
\end{equation}
First we observe that
\begin{align}
\bigg|x_{n}-\frac{\phi(A_n)+\phi(A_0)}{2}&\cdot t_{n}-x'_{n} \bigg| 
= \left|\int_{0}^{t_{n}}\phi(s,\gamma_n(s))\,ds -\frac{\phi(A_n)+\phi(A_0)}{2}\cdot t_{n}\right|\\
\nonumber
\leq &~ \left|\int_{0}^{t_{n}}\phi(s,\gamma_n(s))\,ds -\frac{\phi(A_n)+\phi(A_n')}{2}\cdot t_{n}\right| + \left|\frac{\phi(A_n')-\phi(A_0)}{2}\cdot t_{n} \right| \\
\nonumber
\leq &~\int_0^{t_n}\left|\frac{\phi(s,\gamma_n(s))-\phi(A_n)}{2}\right|+\left|\frac{\phi(s,\gamma_n(s))-\phi(A_n')}{2}\right|ds+\left |\frac{\phi(A_n')-\phi(A_0)}{2}\cdot t_{n} \right|\\
\nonumber
\leq &~
\|g\|_{L^\infty(\omega)}t_{n}^{2}+ o(\sqrt{x_n'})t_n,
\end{align}
where in the last inequality we used that $\phi$ is Lipschitz along the trajectory passing through $A_n$ and $A_n'$ for the first two terms and the little-H\"older regularity of $\phi$ in the $x$ variable for the third term.
Recalling that we are assuming $t_n=o(\sqrt{x_n'})$ as $n\to +\infty$, we deduce
\[
\bigg|x_{n}-\frac{\phi(A_n)+\phi(A_0)}{2}\cdot t_{n}-x'_{n} \bigg| = o(x'_n) \qquad \mbox{as }n \to +\infty,
\]
and since we assumed $x'_n>0$, then $x_{n}-\frac{\phi(A_n)+\phi(A_0)}{2}\cdot t_{n}>0$ for $n$ sufficiently large.
Therefore we can estimate
\[
|B(n)|^2 \le \bigg|x_{n}-\frac{\phi(A_n)+\phi(A_0)}{2}\cdot t_{n}-x'_{n} \bigg| = o(x'_n) \qquad \mbox{as }n \to +\infty
\]
and this proves the claim \eqref{limit}.
}

\vspace{2pt}

\paragraph{\textbf{Case 2}: there is $\delta\in (0,1)$ such that $\frac{t_n}{\sqrt{x'_n}} \ge \delta >0$ {for any $n\in\mathbb{N}$ sufficiently large.}}
Let $\varepsilon \in (0,1)$ and consider \[S_n^\varepsilon= \{ (s,x) \in [0,t_n] \times \setR: \gamma_n(s)-\varepsilon t_n^2 \le x \le \gamma_n(s)+\varepsilon t_n^2 \}.\]
Denote by $\chol$ the $\frac12-$H\"older constant of $\phi$ in a ball $B$ centered at $A_0$. For $n$ sufficiently large $A_n, A'_n \in B$: by estimates like~\eqref{e:Dafermos}-\eqref{e:errore_Holder} we get
\begin{equation}\label{e:DafinHeis}
|\phi(A_n)-\phi(A_0)- g(A_0)t_n| \le 4 \chol
 \sqrt{\varepsilon}t_n + \frac1{2\varepsilon t_n^2}\int_{S_n^\varepsilon} |g - g(A_0)|.
\end{equation}

Just for notational convenience, suppose $\varrho_0=1$. Since $S_n^\varepsilon \subset S_1\left(\gamma_n, t_n \sqrt{\frac1{\delta^2}+ \varepsilon}, 0\right) \in \mathcal V_1$ and $A_0$ is a Lebesgue point of $g$ with respect to the covering $\mathcal V_1$, then
\[
 \eta_n \doteq \frac{1}{\leb^2\left(S_1\left(\gamma_n, t_n \sqrt{\frac1{\delta^2}+ \varepsilon}, 0\right)\right)}\int_{S_n^\varepsilon}|g-g(A_0)|  
\]
vanishes as $n \to \infty$. 
The measure of the set is $4\left({\frac1{\delta^2}+ \varepsilon}\right)^{\frac32} t_{n}^{3}$.
It follows from \eqref{e:DafinHeis} that
\[\begin{split}
\frac{|\phi(A_n)-\phi(A_0)- g(A_0)t_n|}{t_n} \le & ~ 4\chol
 \sqrt{\varepsilon} + \frac{\leb^2\left(S_1\left(\gamma_n, t_n \sqrt{\frac1{\delta^2}+ \varepsilon}, 0\right)\right)}{2\varepsilon t_n^3} \eta_n \\
= &~ 4\chol
 \sqrt{\varepsilon} + \frac{2\left(\frac1{\delta^2}+ \varepsilon\right)^{\frac32}}{\varepsilon}\eta_n.
\end{split}
\]
Recalling that $t_n \leq d_\phi(A_n,A_0)$, the claim follows by letting $n\to \infty$ since $\varepsilon$ is arbitrarily small.
\end{proof}

\appendix

\section{Nonlinear functions of a given order}
\label{S:nonlinearity}

We specify the assumption of finite order nonlinearity, for brevity also called $\ell$-nonlinearity.

\begin{definition}\label{nonlinearFInterval}
Let $I\subset \setR$ be an interval.
A function $f$ differentiable at $ v$ is ``nonlinear of order $\ell >1$ with constant $c>0$ at $ v$'' if there exists $\delta>0$ such that for every $v+h\in I\cap B_{\delta}(v)$ one has
\[
\left|f(v+h)-f(v)-f'( v)h\right|\geq c\left|h\right|^{\ell}\,.
\]
If the inequality holds for all $v,v+h\in I$ we call $f$ ``nonlinear in $I$ of order $\ell $ with constant $c$''.
\end{definition}

Of course, if $f$ is nonlinear of order $\ell >1$ with constant $c>0$ at $ v$ then it is also nonlinear at $v$ of every order $\ell'\geq \ell$ with any constant $0<c'$ just because 
\[
c|h|^{\ell}=c'\cdot\left(\frac{c}{c'|h|^{\ell'-\ell}}\right)\cdot |h|^{\ell'}>c'|h|^{\ell'}\qquad
\text{for $|h|^{\ell'-\ell}\leq \frac{c}{c'}$.}
\]

\begin{remark}
The best order of nonlinearity at a point $v$ is the order $\ell$ of the first non-vanishing derivative at the point, higher than the first.
It can be proved just applying the definition of differentiability $\ell$-times recursively, and the constant is arbitrarily close to $ \frac{1}{\ell!}|f^{(\ell)}( v)|$.
\end{remark}

\begin{lemma}\label{lemma:nondegeneracy}
Suppose $f\in C^{\ell}(I)$. If $\sum_{j=2}^{\ell}\frac{1}{j!}\left|f^{(j)}\right|\geq c$ in the interval $I$; then $f$ is nonlinear at any $v\in I$ of order $\ell$ with constant $c'$, for any $0<c'<c$.

Let $k>1$ be the minimum exponent of nonlinearity of $f$ at $v$ with $c_{k}$ the supremum of the relative constants: then $f^{(k)}(v)=k!c_{k}$ and $f^{(j)}(v)=0$ for $j=2,\dots,k-1$. 

{Finally, if $f''(v)=0$ and $k$ is the minimum exponent of nonlinearity of $f$ at $v$,} then
\begin{equation}\label{E:costanteNONdegere}
\liminf_{\delta\downarrow0}\frac{\min\{f''(x)\ :\ 2\delta\leq |x-v|\leq 3\delta\}}{\max\{f''(x)\ :\ \delta\leq |x-v|\leq 4\delta\}}\geq \frac{1}{2^{k-2}}\,.
\end{equation}
\end{lemma}

\begin{proof}


Let $v, v+h\in I$. 
Suppose $f^{(j)}(v)=0$ for $j=2,\dots,k-1$ and $f^{(k)}(v)\neq0$ for some $2\leq k\leq \ell$.
Just by Taylor's expansion
\[
\left|f(v+h)-f(v)-f'(v)h\right|=\left|\sum_{j=2}^{k-1}\frac{1}{j!}f^{(j)}(v)h^{j}+\frac{1}{k!}f^{(k)}(\xi_h)h^{k}\right|=\left|\frac{f^{(k)}(\xi_h)}{k!} \right|\cdot |h|^k\,,
\]
for some $\xi_h$ between $v$ and $v+h$.
When $k=\ell$, by assumption $\frac{1}{\ell!}\left|f^{(\ell)}(v)\right|\geq c$ and thus $f$ is nonlinear at $v$ of order $\ell$ with constant arbitrarily close to $c$, thanks to the continuity of $f^{(\ell)}$.
If $2\leq k<\ell$ then by the same argument $f$ is nonlinear of order $k$ with some positive constant, and thus nonlinear of order $\ell$ for any positive constant.

Suppose now that $k$ is the minimum exponent of nonlinearity of $f$ at $v$ and denote by $c_{0}$ the supremum of the relative constants: by the first part of the statement in particular $f^{(j)}(v)=0$ for $j=2,\dots,k-1$ and $|f^{(k)}(v)|\leq c_{0}k!$, otherwise there would be a nonlinearity constant bigger than $c_0$. {We now prove $|f^{(k)}(v)|\geq c_{0}k!$. For $n\in\N$, by Definition~\ref{lemma:nondegeneracy} of nonlinear function there is $\delta_n$ decreasing to $0$ such that 
\[
\left|f(v+h)-f(v)-f'(v)h\right|\geq (c_{0}-\sfrac1n) |h|^{\ell} \qquad \forall   |h|\leq \delta_n\,.
\]
Let $ \xi_h$ between $v$ and $v+h$ such that $f(v+h)-f(v)-f'(v)h= \frac{f^{(k)}}{k!}(\xi_h)$: thus $|f^{(k)}(\xi_h)|\geq (c_{0}-\sfrac1n)k!$ when $|h|\leq \delta_n$. Since $\xi_h\to v$ as $n\to+\infty$, by continuity we conclude $|f^{(k)}(v)|\geq c_{0}k!$.}

Consider now $\overline x= v+{\overline h}$ attaining the minimum of $f''$ in the interval $[2\delta, 3\delta]$ for $\delta$ small, then
\[
|f''(\overline x)|=\left|f''(v+{\overline h})-f''(v)\right|=\left| \frac{f^{(k)}(\xi_{{\overline h}})}{(k-2)!}{\overline h}^{k-2}\right|
 \geq
\frac{\left| f^{(k)}(\xi_{{\overline h}})\right|}{(k-2)!}(2\delta)^{^{k-2}}
\,.
\]
Consider now {$\widehat x= v+{\widehat h}$ }attaining the maximum of $f''$ in the interval $[\delta, 4\delta]$ for $\delta$ small, then
\[
|f''(\widehat x)|=\left|f''\left(v+{\widehat h}\right)-f''(v)\right|=\left| \frac{f^{(k)}({\widehat h})}{(k-2)!}{\widehat h}^{k-2}\right|
\leq
\frac{\left| f^{(k)}(\xi_{{\widehat h}})\right|}{(k-2)!}(4\delta)^{^{k-2}}
\,.
\]
In particular, since $ f^{(k)}(\xi_{{\widehat h}})$ and $ f^{(k)}(\xi_{{\overline h}})$ converge to the same value $f^{(k)}(v)\neq0$ as $\delta\downarrow0$,
\begin{equation*}
\liminf_{\delta\downarrow0}\frac{\min\{f''(x)\ :\ 2\delta\leq |x-v|\leq 3\delta\}}{\max\{f''(x)\ :\ \delta\leq |x-v|\leq 4\delta\}}\geq \frac{1}{2^{k-2}}\,.
\end{equation*}
\end{proof}

Nonlinearity provides a lower bound on the increments also of the first derivative of $f$:
\begin{lemma}\label{l:nondegenerate}
Suppose $f$ is nonlinear in an interval $I$ of order $\ell$ with constant $c$. Then
  \begin{equation}\label{e:estimatefprime}
\left|f'(v) - f'(w)\right| \ge 2 c |v-w|^{\ell -1}
\qquad \forall v,w\in I\,.
    \end{equation}
\end{lemma}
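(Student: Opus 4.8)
The plan is to deduce the increment bound on $f'$ directly from the nonlinearity inequality in Definition~\ref{nonlinearFInterval}, using the two-point difference quotient of $f$ and the mean value theorem. Fix $v,w\in I$ and set $h=w-v$. The idea is to write the nonlinear increment $f(v+h)-f(v)-f'(v)h$ in two ways: once expanding around $v$ (which gives $\ge c|h|^\ell$ by hypothesis), and once by the fundamental theorem of calculus / a suitable rearrangement so that $f'(v)-f'(w)$ appears.

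Concretely, I would first observe that for all $v,w\in I$,
\[
f(w)-f(v)-f'(v)(w-v)\quad\text{and}\quad f(v)-f(w)-f'(w)(v-w)
\]
are both, by $\ell$-nonlinearity, at least $c|v-w|^\ell$ in absolute value, and moreover they have a definite sign relationship with $f'(v)-f'(w)$. The cleanest route is: by the mean value theorem there is $\xi$ between $v$ and $w$ with $f(w)-f(v)=f'(\xi)(w-v)$, hence
\[
|f'(\xi)-f'(v)|\,|w-v| = |f(w)-f(v)-f'(v)(w-v)| \ge c|w-v|^\ell,
\]
so $|f'(\xi)-f'(v)|\ge c|w-v|^{\ell-1}$, and symmetrically $|f'(\xi)-f'(w)|\ge c|w-v|^{\ell-1}$. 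Now I would argue that $f'(v)-f'(\xi)$ and $f'(\xi)-f'(w)$ have the same sign: this follows because the two nonlinear increments $f(w)-f(v)-f'(v)(w-v)$ and $f(v)-f(w)-f'(w)(v-w)$ have the same sign (both equal, up to the common factor $(w-v)$, the quantities $f'(\xi)-f'(v)$ and $f'(w)-f'(\xi)$ respectively, wait—one must track signs carefully here). Adding the two lower bounds along the chain $v\to\xi\to w$ then yields $|f'(v)-f'(w)|=|f'(v)-f'(\xi)|+|f'(\xi)-f'(w)|\ge 2c|v-w|^{\ell-1}$, which is exactly \eqref{e:estimatefprime}.

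The step I expect to require the most care is justifying that the increments $f'(v)-f'(\xi)$ and $f'(\xi)-f'(w)$ do not cancel, i.e.\ that they carry the same sign, so that the triangle inequality is actually an equality and the factor $2$ is genuinely gained rather than lost. This amounts to showing that the chord slope $\frac{f(w)-f(v)}{w-v}$ lies strictly between $f'(v)$ and $f'(w)$ (or coincides with neither), which one reads off from the fact that $f(w)-f(v)-f'(v)(w-v)$ and $f(v)-f(w)-f'(w)(v-w)$ are nonzero with opposite signs as functions of the orientation; concretely, writing $\Delta = f(w)-f(v)-f'(v)(w-v)$ one has $f(v)-f(w)-f'(w)(v-w) = -\Delta + (f'(v)-f'(w))(w-v)$, and both being nonzero of known magnitude forces the sign alignment. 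Once this sign bookkeeping is settled, the rest is immediate from the definition. An alternative, perhaps cleaner, presentation avoids $\xi$ entirely: from the two displayed nonlinear inequalities one gets $|(f'(v)-f'(w))(w-v)| = |\Delta + (\text{the other increment})|$ and, with the signs aligned, this is $\ge 2c|w-v|^\ell$, giving the claim after dividing by $|w-v|$.
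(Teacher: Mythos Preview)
Your ``alternative, cleaner presentation'' at the end is exactly the paper's proof: with $h=w-v$ one has the algebraic identity
\[
(f'(w)-f'(v))\,h \;=\; \underbrace{\bigl[f(v+h)-f(v)-f'(v)h\bigr]}_{=:A} \;+\; \underbrace{\bigl[f(w-h)-f(w)+f'(w)h\bigr]}_{=:B},
\]
and the nonlinearity bound applied to each bracket yields $|A+B|\ge 2c|h|^{\ell}$. Your mean-value route is a minor repackaging of the same identity, since $A=(f'(\xi)-f'(v))h$ and $B=(f'(w)-f'(\xi))h$ for the very same $\xi$.

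You are right that the only nontrivial point is the sign alignment needed to pass from $|A|,|B|\ge c|h|^{\ell}$ to $|A+B|\ge 2c|h|^{\ell}$; the paper's one-line proof glosses over it too. However, your attempted justification does not work: the relation you write (with a sign slip --- in fact $B=-A+(f'(w)-f'(v))h$) together with $|A|,|B|\ge c|h|^\ell$ places no constraint whatsoever on the signs of $A$ and $B$. Two nonzero numbers of prescribed magnitude can perfectly well have opposite signs, so ``both being nonzero of known magnitude forces the sign alignment'' is simply false.

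Here is what is missing. The function $g(v,h)=f(v+h)-f(v)-f'(v)h$ is continuous (for $f\in C^1$) and, by hypothesis, never vanishes on $\{(v,h):v,v+h\in I,\ h\ne 0\}$; hence it has constant sign on each of the two components $\{h>0\}$ and $\{h<0\}$. If these signs were opposite --- say $g>0$ for $h>0$ and $g<0$ for $h<0$ --- then for every $x<y$ in $I$ the secant slope $\tfrac{f(y)-f(x)}{y-x}$ would strictly exceed both $f'(x)$ and $f'(y)$; but by the mean value theorem that slope equals $f'(\xi)$ for some $\xi\in(x,y)$, contradicting the fact that the continuous function $f'$ attains its maximum on $[x,y]$. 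Thus $A$ and $B$ always share a sign, and both your argument and the paper's then go through. (In the paper's applications one has $f''\neq 0$ on the relevant interval, so $f$ is strictly convex or concave there and the alignment is immediate.)
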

\begin{proof}
Just by Definition~\eqref{nonlinearFInterval} of nonlinearity when $h=w-v$ we have
\[
\left|f'(w)h-f'(v)h\right|
=\left|f(v+h)-f(v)-f'(v)h+f(w-h)-f(w)+f'(w)h\right|\geq 2c|h|^{\ell}\,.\qedhere
\]
\end{proof}


\begin{lemma}\label{L:inversafprimo}
If $f\in C^{2}(J)$ is nonlinear of order $\ell$ and constant $c$ in a compact interval $J$, then:
\begin{enumerate}
\item When $\ell=2$ the second derivative of $f$ has lower bound $ 2c$ in $J$ and it holds
\begin{equation}\label{e:ration}
         {|f'(v)-f'(w)|}{} \ge q_{w,v}\cdot \|f''\|_{L^{\infty}(J)}\cdot  |v-w|  \,,
\end{equation}
for all $v,w\in  J$, with $q_{w,v}\doteq\frac{2c}{ \|f''\|_{L^{\infty}(J)}}$ belonging in $(0,1]$ and independent on $w,v$. 
\item When $\ell>2$ the set $Z=\{v \in J: f''(v)=0\}$ is finite and inequality \eqref{e:ration} holds whenever $[w,v]\cap Z=\emptyset$ with  $q_{w,v}\doteq\frac{\min_{[w,v]} |f''|}{\max_{J} |f''|}$ belonging in $(0,1]$.
 \end{enumerate}
\end{lemma}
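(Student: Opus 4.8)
The plan is to reduce both regimes to the fundamental theorem of calculus together with the increment estimate of Lemma~\ref{l:nondegenerate}, the genuinely nonlinear input being confined to the finiteness of $Z$. \emph{Case $\ell=2$.} First I would check that $f''$ is bounded away from zero: fixing $v\in J$ and, for $v+h\in J$, choosing by Taylor's formula a point $\xi_h$ between $v$ and $v+h$ with $f(v+h)-f(v)-f'(v)h=\frac12 f''(\xi_h)h^2$, nonlinearity of order $2$ forces $|f''(\xi_h)|\ge 2c$, whence $|f''(v)|\ge 2c$ on letting $h\to0$ and using continuity of $f''$. Thus $|f''|\ge 2c$ on $J$, and, $f''$ being continuous on the interval $J$, it has constant sign; in particular $\|f''\|_{L^\infty(J)}\ge 2c$, so $q=2c/\|f''\|_{L^\infty(J)}\in(0,1]$, independently of $v,w$. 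As $f''$ does not change sign, $|f'(v)-f'(w)|=\bigl|\int_w^v f''\bigr|=\int_{[w,v]}|f''|\ge 2c\,|v-w|=q\,\|f''\|_{L^\infty(J)}\,|v-w|$, which is \eqref{e:ration}; the last chain is also Lemma~\ref{l:nondegenerate} at $\ell=2$.

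\emph{Case $\ell>2$, the inequality \eqref{e:ration}.} This step needs only the hypothesis $[w,v]\cap Z=\emptyset$, not finiteness of $Z$: then $f''$ is continuous and nowhere zero on the interval $[w,v]$, hence of constant sign there, so $|f'(v)-f'(w)|=\int_{[w,v]}|f''|\ge |v-w|\min_{[w,v]}|f''|=q_{w,v}\,\|f''\|_{L^\infty(J)}\,|v-w|$ with $q_{w,v}=\min_{[w,v]}|f''|/\max_J|f''|$. This ratio lies in $(0,1]$ since $0<\min_{[w,v]}|f''|\le\max_J|f''|$: the left inequality because a continuous nonvanishing function attains a positive minimum on the compact $[w,v]$, the right because $[w,v]\subset J$ (and $\max_J|f''|>0$, otherwise $f$ would be affine on $J$, against nonlinearity).

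\emph{Case $\ell>2$, finiteness of $Z$.} Here I expect the main obstacle. The plan is to show that every $v_0\in Z$ is isolated in $Z$; since $Z$ is closed and $J$ compact, a set of isolated points is then finite. Fix $v_0$ with $f''(v_0)=0$. Nonlinearity of order $\ell$ at $v_0$ excludes $f''(v_0)=f'''(v_0)=\dots=f^{(\ell)}(v_0)=0$, for otherwise Taylor's expansion would give $f(v_0+h)-f(v_0)-f'(v_0)h=o(|h|^\ell)$, contradicting the bound $\ge c|h|^\ell$; so there is a least $m\in\{3,\dots,\ell\}$ with $f^{(m)}(v_0)\neq0$, and the expansion $f''(v_0+t)=\frac{f^{(m)}(v_0)}{(m-2)!}\,t^{m-2}+o(t^{m-2})$ shows $f''$ is nonzero on a punctured neighbourhood of $v_0$. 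The delicate point is the regularity needed to run this Taylor argument: it is transparent when $f\in C^\ell$ — it is then exactly the identification of the order of nonlinearity at a point with the order of the first nonvanishing derivative, as in the Remark after Definition~\ref{nonlinearFInterval} — whereas with only $f\in C^2$ one must argue directly, for instance through the modulus $\omega(t)=\sup_{|s|\le t}|f''(v_0+s)|$, for which nonlinearity yields $\omega(t)\ge 2c\,t^{\ell-2}$, and then exploit the global validity of the nonlinearity inequality on all of $J$ to preclude clustering of the zeros of $f''$ at $v_0$. Granting finiteness of $Z$, the inequality of the second paragraph completes the statement.
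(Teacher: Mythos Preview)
Your argument is correct and follows the paper's proof closely: both reduce the case $\ell=2$ to the bound \eqref{e:estimatefprime} of Lemma~\ref{l:nondegenerate} (you additionally spell out the Taylor step giving $|f''|\ge 2c$, which the paper leaves implicit), both derive \eqref{e:ration} for $\ell>2$ from the constant sign of $f''$ on $[w,v]$ via the mean value theorem, and both prove finiteness of $Z$ by observing that at an accumulation point every derivative $f^{(j)}$, $j\ge 2$, would vanish, contradicting $\ell$-nonlinearity through Taylor's expansion. You are in fact more careful than the paper on one point: this last step tacitly requires $f\in C^\ell$ rather than the $C^2$ stated in the lemma, and the paper's own proof simply writes ``$f^{(j)}(\bar v)=0$ for every $j\ge 2$'' without further comment; your proposed $C^2$ workaround via $\omega(t)\ge 2c\,t^{\ell-2}$ is left as a sketch and, as stated, does not by itself exclude an oscillatory $f''$ with clustering zeros, but since the lemma is only invoked in the paper under the hypothesis $f\in C^\ell$ (Proposition~\ref{p:holder}), the Taylor route is the one that carries the application.
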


\begin{proof}
\begin{enumerate}
\item When $\ell=2$ the statement follows immediately by~\eqref{e:estimatefprime}.
\item At any accumulation point $\overline v$ of $D$ necessarily $f^{(j)}(\bar v)=0$ for every $j\ge 2$, thus by Taylor's expansion we would have $f(\overline v+h)-f(\overline v)-f'(\overline v)h=o(h^{\ell})$: this would contradict the finite order of nonlinearity of $f$.
In particular, also when $\ell>2$ the set $D$ must be discrete.

If $[u,v]\cap K=\emptyset$ then $f$ is nonlinear of order $2$ in $[u,v]$ and $ {|f'(v)-f'(w)|}=|f''(\xi)||v-w|$ with $\xi\in(v,w)$.
Writing $|f''(\xi)|\geq  \min_{[u,v]} |f''|\cdot \frac{\max_{J} |f''|}{\max_{J} |f''|}$ one has the claim.
\qedhere
\end{enumerate}
\end{proof}

\section*{Acknowledgments:} The authors would like to thank G. Alberti, S. Bianchini, R. Serapioni, F. Serra Cassano and D. Vittone for useful conversations on the topics of the paper.
The authors are partially supported by the Gruppo Nazionale per l’Analisi Matematica, la Probabilit\`a e le loro Applicazioni (GNAMPA) of the Istituto Nazionale di Alta Matematica (INdAM).
E. M. is partially supported by the European Union's Horizon 2020 research and innovation program under the Marie Sklodowska-Curie grant No. 101025032. 
L. C. and E. M. are partially supported by the PRIN
2020 ”Nonlinear evolution PDEs, fluid dynamics and transport equations: theoretical foundations and applications” and PRIN PNR P2022XJ9SX of the European Union – Next Generation EU.
A. P. is partially supported by the PRIN 2017  "Gradient flows, Optimal Transport and Metric Measure Structures".

No datasets were generated or analysed during the current study.


\end{document}